\newtheorem{thm}{}[section]
\newtheorem{theorem}[thm]{Theorem}
\newtheorem{cor}[thm]{Corollary}
\newtheorem{definition}[thm]{Definition}
\newtheorem{prop}[thm]{Proposition}
\theoremstyle{remark}
\numberwithin{equation}{section}
\newcommand{\Env}[2][]{% 
	\ifthenelse{ \equal{#1}{} }  
	{\ensuremath{#2_{\mathsf{c}}}}  
	{\ensuremath{#2_{\mathsf{c},#1}}}
}
\DeclareMathOperator{\supp}{supp}
\DeclareMathOperator{\sgn}{sign}
\newcommand{\N}{\mathbb{N}}
\newcommand{\R}{\mathbb{R}}
\newcommand{\C}{\mathbb{C}}
\newcommand{\X}{\mathbb{X}}
\def\ba{\begin{eqnarray*}}
	\def\ea{\end{eqnarray*}}
\def\bee{\begin{equation}}
	\def\ene{\end{equation}}
\newcommand{\x}{\mathbf{x}}
\newcommand{\one}{\mathbf{1}}
\begin{document}
	\setcounter{page}{1}
	
	\title[1-greedy-like bases]{Non-linear approximation by $1$-greedy bases}
	
	\author[P.M. Bern\'a]{Pablo M. Bern\'a}
	\address{Pablo M. Bern\'a\\
		Departamento de Métodos Cuantitativos, CUNEF Universidad\\ 
		Madrid\\
		28040 Spain} 
	\email{pablo.berna@cunef.edu}
	
		\author[D. Gonz\'alez]{David Gonz\'alez}
	\address{David Gonz\'alez\\
		Universidad Cardenal Herrera‐CEU, CEU Universities \\ 
		San Bartolomé 55, 46115 \\ 
		Alfara del Patriarca, Valencia, Spain}
	\email{dgonzalezmoro@gmail.com}

	\subjclass[2020]{41A65,46B15}

	\keywords{Greedy bases, Greedy Algorithm, Unconditional bases} 
	
	%\begin{document}
	
	\begin{abstract} 
		The theory of greedy-like bases started in 1999 when S. V. Konyagin and V. N. Temlyakov introduced in \cite{KT} the famous Thresholding Greedy Algorithm. Since this year, different greedy-like bases appeared in the literature, as for instance: quasi-greedy, almost-greedy and greedy bases. The purpose of this paper is to introduce some new characterizations of 1-greedy bases. Concretely, given a basis $\mathcal B=(\x_n)_{n\in\N}$ in a Banach space $\X$, we know that $\mathcal B$ is $C$-greedy with $C>0$ if $\Vert f-\mathcal G_m(f)\Vert\leq C\sigma_m(f)$ for every $f\in\X$ and every $m\in\N$, where $\sigma_m(f)$ is the best $m$th error in the approximation for $f$, that is, $\sigma_m(f)=\inf_{y\in\X : \vert \supp(y)\vert\leq m}\Vert f-y\Vert$. Here, we focus our attention when $C=1$ showing that a basis is 1-greedy if and only if $\Vert f-\mathcal G_1(f)\Vert=\sigma_1(f)$ for every $f\in\X$.
	\end{abstract}
	
	\maketitle
	
	\section{Introduction and background}
	Let $\X$ be a Banach space over the field $\mathbb F=\R$ or $\C$, and let $\mathcal B=(\x_n)_{n=1}^\infty$ be a semi-normalized Markushevich basis of $\X$, that is,
	\begin{itemize}
		\item $\overline{\text{span}(\x_n : n\in\N)}=\mathbb X$;
		\item there exists a unique sequence $(\x_n^*)_{n=1}^\infty\subset\X^*$ (called biorthogonal functionals) such that $\x_n^*(\x_j)=\delta_{n,j}$;
		\item $\overline{\text{span}(\x_n^*: n\in\mathbb N)}^{w^*}=\mathbb X^*$;
		\item there exist $c_1,c_2$ such that
		$$0<c_1\leq\inf_n\lbrace\Vert\x_n\Vert,\Vert \x_n^*\Vert\rbrace\leq\sup_n \lbrace \Vert\x_n\Vert,\Vert \x_n^*\Vert\rbrace\leq c_2<\infty.$$
	\end{itemize}
	Under these conditions, for every $f\in\mathbb X$, we have the series expansion
	$$f\sim \sum_{n=1}^{\infty}\x_n^*(f)\x_n,$$
	where $(\x_n^*(f))_{n\in\mathbb N}\in c_0$. We denote by $\X_f$ the subspace of $\X$ where the support of $f$ is finite, that is $\vert\text{supp}(f)\vert<\infty$ with $\text{supp}(f)=\lbrace n\in\N : \x_n^*(f)\neq 0\rbrace$. Moreover, we will use the quantity $\Vert(\x_n^*(f))_n\Vert_\infty=\sup_{n\in\supp(f)}\vert\x_n^*(f)\vert$.
	Also, related to a Banach space, we can define the so called \textit{indicator sums}: let $A$ be a finite set and $\varepsilon=(\varepsilon_n)_{n\in A}$ a collection of numbers of modulus one, that is, $\vert\varepsilon_n\vert=1$ for all $n\in A$ ($\vert\varepsilon\vert=1$ for short). We define the (signed) indicator sums as follows:
	$$\one_{A}[\mathcal B,\X]=\one_{A}:=\sum_{j\in A}\x_j,\; \one_{\varepsilon A}[\mathcal B,\X]=\one_{\varepsilon A}:=\sum_{j\in A}\varepsilon_j\x_j.$$
	Of course, if $\varepsilon\equiv 1$, $\one_{\varepsilon A}=\one_A$.
	
	In 1999, in \cite{KT}, S. V. Konyagin and V. N. Temlyakov introduced the Thresholding Greedy Algorithm (TGA): consider $\X$ a Banach space with a basis $\mathcal B=(\x_n)_{n\in\mathbb N}$ and take $f\in\mathbb X$ and $m\in\mathbb N$. We define the \textit{natural greedy ordering} of $f\in\X$ as a map $\rho:\N\rightarrow\N$ such that $\supp(f)\subseteq\rho(\N)$ and such that if $j<k$, then either $\vert\x_{\rho(j)}^*(f)\vert>\vert\x_{\rho(k)}^*(f)\vert$ or $\vert\x_{\rho(j)}^*(f)\vert=\vert\x_{\rho(k)}^*(f)\vert$ and $\rho(j)<\rho(k)$. Then, the \textit{$m$th greedy sum of $f$} is 
	$$\mathcal G_m[\mathcal B,\X](f)=\mathcal G_m(f):=\sum_{j=1}^{m}\x_{\rho(j)}^*(f)\x_{\rho(j)}.$$
	Alternatively, denoting by $A_m(f)=\lbrace \rho(1),\cdots,\rho(m)\rbrace\subset\supp(f)$, we can identify the $m$th greedy sum like the projection
	$$\mathcal G_m(f)=\sum_{n\in A_m(f)}\x_n^*(f)\x_n,$$
	where $A_m(f)$ is called the \textit{$m$th greedy set of $f$} and verifies the condition
	$$\min_{n\in A_m(f)}\vert \x_n^*(f)\vert\geq\max_{n\not\in A_m(f)}\vert x_n^*(f).$$
	
	Once we have the algorithm, the first natural question is when the algorithm converges. For that, S. V. Konyagin and V. N. Temlyakov introduced the notion of quasi-greediness: we say that $\mathcal B$ in a Banach space $\X$ is \textbf{quasi-greedy} if there is $C>0$ such that
	$$\Vert f-\mathcal G_m(f)\Vert\leq C\Vert f\Vert,\; \forall m\in\N, \forall f\in\X.$$
	The relation between this property and the convergence was given by P. Wojtaszczyk in \cite{W}, where he proved the following: a basis is quasi-greedy if and only if
	$$\lim_{m\rightarrow+\infty}\Vert f-\mathcal G_m(f)\Vert=0,\; \forall f\in\X.$$
	Another situation to study is when the algorithm produces, up to some constant, the best approximation. For that, we need to introduce the best $m$th error in the approximation: given $m\in\mathbb N$ and $f\in\X$,
	$$\sigma_m[\mathcal B,\X](f)=\sigma_m(f):=\left\lbrace\left\Vert f-\sum_{j\in B}a_j\x_j\right\Vert : \vert B\vert\leq m, a_j\in\mathbb F\right\rbrace.$$
	Hence, we want to know when $\Vert f-\mathcal G_m(f)\Vert$ is comparable to $\sigma_m(f)$ and for that, we have the concept of greedy bases.
	\begin{definition}[\cite{KT}]
		We say that a basis $\mathcal B$ in a Banach space $\X$ is \textbf{greedy} if there is a positive constant $C>0$ such that
		\begin{eqnarray}\label{defgreedy}
			\Vert f-\mathcal G_m(f)\Vert\leq C\sigma_m(f),\; \forall m\in\N, \forall f\in\X.
		\end{eqnarray}
		The least constant verifying \eqref{defgreedy} is denoted by $C_g[\mathcal B,\mathbb X]=C_g$ and we say that $\mathcal B$ is $C_g$-greedy.
	\end{definition}
	There are several examples of these type of bases. Some of them are the following:
	\begin{itemize}
		\item Every orthonormal basis in a Hilbert space $\mathbb H$ is $1$-greedy, that is,
		$$\Vert f-\mathcal G_m(f)\Vert=\sigma_m(f),\; \forall m\in\mathbb N, \forall f\in\mathbb H.$$
		\item The Haar system in $L_p(0,1)$, $1<p<\infty$, is $C_g$-greedy with
		$$C_g\approx \max\lbrace p,q\rbrace,$$
		where $q$ is the conjugate of $p$ (\cite{AAB}).
		\item The canonical basis in the space $\ell_p$, $1\leq p<\infty$, is $1$-greedy.
	\end{itemize}
	In \cite{KT}, the authors provide a nice characterization of theses bases using two properties: democracy and unconditionality.
	
	\begin{definition}
		We say that a basis $\mathcal B$ in a Banach space $\X$ is \textbf{suppression unconditional} if there is $K\geq 1$ such that
		\begin{eqnarray}\label{defuncon}
			\Vert f-P_A(f)\Vert\leq K\Vert f\Vert,\, \forall \vert A\vert<\infty, \forall f\in\X,
		\end{eqnarray}
		where $P_A$ is the projection operator, that is, $P_A(f)=\sum_{j\in A}\x_j^*(f)\x_j$. The least constant verifying \eqref{defuncon} is denoted by $K_s[\mathcal B,\X]=K_s$ and we say that $\mathcal B$ is $K_s$-suppression unconditional.
	\end{definition}
	
	\begin{definition}
		We say that a basis $\mathcal B$ in a Banach space $\X$ is \textbf{super-democratic} if there is a positive constant $C$ such that
		\begin{eqnarray}\label{defsup}
			\Vert \one_{\varepsilon A}\Vert\leq \Vert\one_{\eta B}\Vert,\; \forall \vert A\vert\leq\vert B\vert<\infty, \forall \vert\varepsilon\vert=\vert\eta\vert=1.
		\end{eqnarray}
		The least constant verifying \eqref{defsup} is denoted by $\Delta_s[\mathcal B,\X]=\Delta_s$ and we say that $\mathcal B$ is $\Delta_s$-super-democratic. In addition, if \eqref{defsup} is satisfied for $\varepsilon\equiv\eta\equiv 1$, we say that $\mathcal B$ is $\Delta_d$-democratic.
	\end{definition}
	
	With these definitions, the characterization proved by S. V. Konyagin and V. N. Temlyakov is the following one.
	\begin{theorem}[\cite{KT}]
		A basis $\mathcal B$ in a Banach space $\X$ is greedy if and only if the basis is suppression-unconditional and democratic. 
		Quantitatively, we have
		$$\max\lbrace K_s, \Delta_d\rbrace\leq C_g\leq K_s(1+\Delta_d).$$
	\end{theorem}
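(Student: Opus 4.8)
The statement packages an ``if and only if'' together with the two--sided estimate $\max\{K_s,\Delta_d\}\le C_g\le K_s(1+\Delta_d)$, so the plan is to establish three facts: $K_s\le C_g$ and $\Delta_d\le C_g$ (which give the forward implication and the lower numerical bound), and $C_g\le K_s(1+\Delta_d)$ (the reverse implication and the upper bound). The two directions have quite different flavours: for the forward one I would feed carefully designed test vectors into the greedy inequality \eqref{defgreedy}, while for the reverse one I would rely on an exact algebraic decomposition of $f-\mathcal{G}_m(f)$.

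For $K_s\le C_g$, fix a finite set $A$; since $f\mapsto f-P_A(f)$ is continuous it is enough to argue for $f\in\X_f$ with $A\subseteq\supp(f)$. Putting $m=|A|$ and choosing a scalar $\lambda$ with $|\lambda|>2\Vert(\x_n^*(f))_n\Vert_\infty$, I would set $g=f+\lambda\sum_{n\in A}\sgn(\x_n^*(f))\x_n$, so that the coefficients of $g$ on $A$ strictly dominate all others, $A$ becomes an $m$th greedy set of $g$, and $g-\mathcal{G}_m(g)=f-P_A(f)$; on the other hand $g$ minus the single vector $\lambda\sum_{n\in A}\sgn(\x_n^*(f))\x_n$ (whose support has $m$ elements) equals $f$, so $\sigma_m(g)\le\Vert f\Vert$, and \eqref{defgreedy} for $g$ then gives $\Vert f-P_A(f)\Vert\le C_g\Vert f\Vert$. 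For $\Delta_d\le C_g$ I would test \eqref{defgreedy} on $f=\one_A+(1+\e)\one_B$ for disjoint $A,B$ with $|A|=|B|=m$: here the $m$th greedy set of $f$ is $B$, so $f-\mathcal{G}_m(f)=\one_A$, while $f-\one_A=(1+\e)\one_B$ shows $\sigma_m(f)\le(1+\e)\Vert\one_B\Vert$; letting $\e\downarrow0$ gives $\Vert\one_A\Vert\le C_g\Vert\one_B\Vert$, and the case $|A|<|B|$ would follow by a routine reduction (which can also use $K_s\le C_g$, already in hand).

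For the reverse implication, assume $\mathcal{B}$ is $K_s$-suppression unconditional and $\Delta_d$-democratic, fix $f$ and $m$, let $A=A_m(f)$ be an $m$th greedy set, set $\alpha=\min_{n\in A}|\x_n^*(f)|$, and take an almost-optimal $m$-term approximant $g$ with $\Lambda:=\supp(g)$ (so $|\Lambda|\le m$). The engine of the proof is the identity
\[
	f-\mathcal{G}_m(f)=(I-P_{A\cup\Lambda})(f-g)+P_{\Lambda\setminus A}(f),
\]
which I would check one coordinate at a time (on $A$ both sides vanish; on $\Lambda\setminus A$ the first summand is zero and $P_{\Lambda\setminus A}(f)$ returns $\x_n^*(f)$, matching $f-\mathcal{G}_m(f)$ since $\Lambda\setminus A$ misses the greedy set; off $A\cup\Lambda$ only the first summand survives and equals $\x_n^*(f)$ because $g$ is supported on $\Lambda$). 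The first summand is $\le K_s\Vert f-g\Vert$ directly from suppression unconditionality. For the second summand I would use the three observations $|\Lambda\setminus A|\le|A\setminus\Lambda|$ (as $|\Lambda|\le m=|A|$), $|\x_j^*(f)|\le\alpha$ for $j\in\Lambda\setminus A$ (these indices lie outside the greedy set), and $|\x_n^*(f-g)|=|\x_n^*(f)|\ge\alpha$ for $n\in A\setminus\Lambda$ (where $g$ vanishes), and then chain unconditionality and democracy to bound $\Vert P_{\Lambda\setminus A}(f)\Vert$ by (a constant multiple of) $\Delta_d K_s\Vert f-g\Vert$: unconditionality to pass from $P_{\Lambda\setminus A}(f)$ to a flat indicator sum $\alpha\one_{\Lambda\setminus A}$, democracy to replace $\Lambda\setminus A$ by the at-least-as-large set $A\setminus\Lambda$, and unconditionality again to dominate $\alpha\one_{A\setminus\Lambda}$ by $P_{A\setminus\Lambda}(f-g)$ and hence by $K_s\Vert f-g\Vert$. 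Adding the two pieces and taking the infimum over the approximant $g$ produces the greedy inequality, and the point of fusing $(I-P_A)(f-g)$ with $P_{\Lambda\setminus A}(f-g)$ into the single term $(I-P_{A\cup\Lambda})(f-g)$ is precisely to avoid an extra factor in the constant.

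The forward implication is the soft part; the only real care there is choosing $\lambda$ (respectively $\e$) generically so that the greedy set is unambiguous, together with the elementary handling of unequal cardinalities for democracy. The hard part is the estimate of $P_{\Lambda\setminus A}(f)$ in the reverse implication: this is the single place where \emph{both} hypotheses are genuinely invoked, and it is also where the sharp constant $K_s(1+\Delta_d)$ must be extracted --- a naive chaining of the unconditionality and democracy estimates costs extra factors, so the delicate work is (a) passing from an arbitrary coefficient vector of sup-norm $\le\alpha$ to the corresponding indicator sum (immediate for real scalars, needing a short averaging argument over signs in the complex case) and (b) arranging the cardinality/threshold comparison between $\Lambda\setminus A$ and $A\setminus\Lambda$ so that democracy applies with no loss.
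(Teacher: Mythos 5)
The paper does not prove this statement --- it is quoted from \cite{KT}, with the sharp constants as refined in \cite{BBG,AABW} --- so there is no in-paper argument to compare against. Your architecture (test vectors for the two lower bounds, the exact decomposition $f-\mathcal G_m(f)=(I-P_{A\cup\Lambda})(f-g)+P_{\Lambda\setminus A}(f)$ for the upper bound) is the standard modern one, and the identity itself, the estimate $K_s\le C_g$, and the bound on the first summand are all correct. However, there are two genuine gaps, both sitting exactly at the sharp constants the statement asserts. First, the definition \eqref{defsup} of $\Delta_d$ does not require $A\cap B=\emptyset$, and your test vector $f=\one_A+(1+\e)\one_B$ only exists for disjoint sets: an index of $A\cap B$ would have to survive the greedy step with coefficient $1$ and simultaneously be removed by it. (Note also that this vector already handles $|A|<|B|$ disjoint with no reduction at all, since $y=\one_A$ is still an admissible $m$-term competitor.) The ``routine reduction using $K_s\le C_g$'' you propose for the remaining cases necessarily passes through a projection or a third set and multiplies constants, so it yields at best $\Delta_d\le K_sC_g$ or $C_g^2$, not $\Delta_d\le C_g$. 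The overlapping case needs a different test vector, e.g.\ $f=\one_A+t\one_{B\setminus A}$ with $t>1$ and $m=|B\setminus A|$, for which $f-\mathcal G_m(f)=\one_A$ and the competitor $y=\one_{A\setminus B}$ (admissible since $|A\setminus B|\le|B\setminus A|$) gives $\Vert f-y\Vert\to\Vert\one_B\Vert$ as $t\downarrow 1$.

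Second, the chain proposed for $\Vert P_{\Lambda\setminus A}(f)\Vert\le\Delta_dK_s\Vert f-g\Vert$ does not close with the claimed constant. The convexity step produces a \emph{signed} indicator $\alpha\one_{\e(\Lambda\setminus A)}$, and suppression unconditionality does not strip signs for free (it controls projections, not sign changes; removing $\e$ costs a factor of order $1+2K_s$). Symmetrically, the final step --- dominating the indicator over $A\setminus\Lambda$ by $K_s\Vert f-g\Vert$ via the truncation estimate of Proposition \ref{bb} --- only applies when that indicator carries the signs $\sgn(\x_n^*(f-g))$ for $n\in A\setminus\Lambda$, not the flat $\one_{A\setminus\Lambda}$. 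Both ends of the chain therefore force signed indicator sums, and what your argument actually delivers without loss is $C_g\le K_s(1+\Delta_s)$ with the superdemocracy constant, i.e.\ the estimate \eqref{estim}; extracting the plain-democracy constant $K_s(1+\Delta_d)$ requires an additional argument that you correctly flag as the delicate point but do not supply.
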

	The same characterization works replacing democracy by super-democracy obtaining the estimates
	\begin{eqnarray}\label{estim}
		\max\lbrace K_s, \Delta_s\rbrace\leq C_g\leq K_s(1+\Delta_s),
	\end{eqnarray}
	as we can see in \cite{BBG} or \cite{AABW}.

	Another characterization of greedy bases was given in \cite{BB} where the authors proved that is possible to substitute the error $\sigma_m(f)$ by the best $m$th error in the approximation by subspaces of dimension one.
	\begin{theorem}[\cite{BB}]
		A basis in a Banach space is greedy if and only if there is $C>0$ such that
		$$\Vert f-\mathcal G_m(f)\Vert \leq C\mathcal D_m(f),\; \forall m\in\mathbb N, \forall f\in\X,$$
		where
		$$\mathcal D_m[\mathcal B,\X](f)=\mathcal D_m(f):=\inf\left\lbrace\left\Vert f-\alpha \one_A\right\Vert: \vert A\vert\leq m, \alpha\in\mathbb F\right\rbrace.$$
	\end{theorem}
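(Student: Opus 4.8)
The statement separates into a trivial implication and a substantive one, which I would treat in that order. For the ``only if'' direction the plan is simply to note that $\mathcal{D}_m$ dominates $\sigma_m$: every vector $\alpha\one_A$ with $|A|\le m$ and $\alpha\in\mathbb F$ is one of the competitors in the infimum defining $\sigma_m(f)$ (take $B=A$ and all coefficients equal to $\alpha$), so $\sigma_m(f)\le\mathcal{D}_m(f)$ for every $f\in\X$ and $m\in\N$. Plugging this into \eqref{defgreedy} gives $\Vert f-\mathcal{G}_m(f)\Vert\le C_g\sigma_m(f)\le C_g\mathcal{D}_m(f)$, which is the asserted inequality with $C=C_g$.

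For the ``if'' direction, assume $\Vert f-\mathcal{G}_m(f)\Vert\le C\mathcal{D}_m(f)$ for all $f\in\X$ and $m\in\N$. By the Konyagin--Temlyakov characterization recalled above it is enough to show that $\mathcal{B}$ is suppression unconditional and democratic, and the strategy for each is to feed the hypothesis a single cleverly chosen auxiliary vector. For unconditionality, fix a finitely supported $f=\sum_{n\in S}\x_n^*(f)\x_n\ne 0$ and a set $A\subseteq S$; by a routine density argument it suffices to bound $\Vert f-P_A(f)\Vert$ for such $f$. The device is the vector $g:=f+M\one_A$ with $M:=2(\Vert(\x_n^*(f))_n\Vert_\infty+1)$. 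Off $A$ the coefficients of $g$ are those of $f$, while on $A$ they have modulus at least $M-\Vert(\x_n^*(f))_n\Vert_\infty>\Vert(\x_n^*(f))_n\Vert_\infty$; hence $A_{|A|}(g)=A$, so $\mathcal{G}_{|A|}(g)=P_A(g)$ and $g-\mathcal{G}_{|A|}(g)=f-P_A(f)$. On the other hand $M\one_A$ is an admissible competitor for $\mathcal{D}_{|A|}(g)$ and $\Vert g-M\one_A\Vert=\Vert f\Vert$, so $\mathcal{D}_{|A|}(g)\le\Vert f\Vert$. Combining, $\Vert f-P_A(f)\Vert=\Vert g-\mathcal{G}_{|A|}(g)\Vert\le C\mathcal{D}_{|A|}(g)\le C\Vert f\Vert$; a density argument then gives $K_s\le C$, and in particular $\Vert P_A\Vert\le 1+C$ for every finite $A$.

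With unconditionality available, democracy follows from a short perturbation. For disjoint $A,B$ with $|A|=|B|=m$, apply the hypothesis to $f=2\one_A+\one_B$: its greedy set of order $m$ is $A$ (the modulus-$2$ coefficients strictly dominate), so $\Vert\one_B\Vert=\Vert f-\mathcal{G}_m(f)\Vert\le C\mathcal{D}_m(f)\le C\Vert f-\one_B\Vert=2C\Vert\one_A\Vert$, and by symmetry $\Vert\one_A\Vert\le 2C\Vert\one_B\Vert$. For general $|A|\le|B|$, write $\one_A=\one_{A\setminus B}+\one_{A\cap B}$, pick $B'\subseteq B\setminus A$ with $|B'|=|A\setminus B|$ (possible since $|B\setminus A|\ge|A\setminus B|$), and combine the equal-cardinality bound applied to the disjoint pair $(A\setminus B,B')$ with the estimates $\Vert\one_{B'}\Vert,\Vert\one_{A\cap B}\Vert\le(1+C)\Vert\one_B\Vert$ (from $K_s\le C$, since $B',A\cap B\subseteq B$) to obtain $\Vert\one_A\Vert\le(1+C)(1+2C)\Vert\one_B\Vert$. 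Hence $\mathcal{B}$ is democratic, and the cited theorem yields that $\mathcal{B}$ is greedy with $C_g\le K_s(1+\Delta_d)$. I expect the only genuine difficulty to be the unconditionality step, more precisely the observation that the apparent rigidity of $\mathcal{D}_m$ (one scalar, one set, no signs) is not an obstruction: one can force the $|A|$ dominant coordinates of the auxiliary vector to carry exactly the values $\x_n^*(f)+M$ that make the greedy residual equal $f-P_A(f)$, while still leaving those coordinates annihilated by the single admissible competitor $M\one_A$. Writing $g=f+M\one_A$ is precisely what reconciles these two requirements.
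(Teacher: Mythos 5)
The paper itself offers no proof of this statement—it is quoted verbatim from \cite{BB}—so there is no in-paper argument to measure you against; judged on its own, your proof is correct and complete. The ``only if'' direction via $\sigma_m(f)\le\mathcal D_m(f)$ is exactly the right (and only) observation. In the ``if'' direction, the auxiliary vector $g=f+M\one_A$ does what you claim: the shifted coefficients on $A$ strictly dominate in modulus, so $A$ is the unique greedy set of order $\vert A\vert$, the residual collapses to $f-P_A(f)$, and $M\one_A$ is a legitimate competitor in $\mathcal D_{\vert A\vert}(g)$, giving $K_s\le C$ after the density step; the democracy step with $2\one_A+\one_B$ and the reduction from $\vert A\vert\le\vert B\vert$ to the disjoint equal-cardinality case via $B'\subseteq B\setminus A$ is also sound (note you could quote $\Vert\one_{B'}\Vert\le C\Vert\one_B\Vert$ directly from suppression unconditionality rather than $(1+C)\Vert\one_B\Vert$, but this only affects the constant). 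Your route differs from the one in \cite{BB}, which establishes this theorem through the Property (Q)/(Q$^*$) machinery and symmetry for largest coefficients—the same toolkit the present paper uses for its $1$-greedy results (Theorems \ref{th2} and \ref{bb2}). Going instead through the original Konyagin--Temlyakov characterization (unconditional plus democratic) is more elementary and yields the qualitative equivalence just as well; what it gives up is only sharpness of constants, which is immaterial here since the statement asserts no quantitative bound.
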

	Hence, we have different equivalences for greedy bases, but here we analyze the case of $1$-greedy bases. The constant was a case studied by F. Albiac and P. Wojtaszczyk in \cite{AW}, where they realized that there exists a basis that is $1$-suppression unconditional, $1$-(super)democratic and $2$-greedy. This example has two consequences:
	\begin{itemize}
		\item The first one is that the upper estimate \eqref{estim} is optimal.
		\item It is not possible to characterize $1$-greedy bases in terms of $1$-suppression unconditional and $1$-democratic bases. 
	\end{itemize}
	To get $1$-greediness from a theoretical point of view, they introduced the Property (A) that in \cite{DKOSZ} was renamed and extended to the notion of symmetry for largest coefficients.
	\begin{definition}
		We say that a basis $\mathcal B$ in a Banach space $\X$ is \textbf{symmetric for largest coefficients} if there is a positive constant $C$ such that
		\begin{eqnarray}\label{defsym}
			\Vert f+\one_{\varepsilon A}\Vert\leq C\Vert f+\one_{\eta B}\Vert,
		\end{eqnarray}
		for all $f\in\mathbb X$, $A, B, \varepsilon$ and $\eta$ such that $\Vert(\x_n^*(f))_n\Vert_\infty\leq 1$, $\vert A\vert\leq\vert B\vert<\infty$, $A\cap B=\emptyset$, $\text{supp}(f)\cap (A\cup B)=\emptyset$ and $\vert\varepsilon\vert=\vert\eta\vert=1$. The least constant verifying \eqref{defsym} is denoted by $\Delta[\mathcal B,\X]=\Delta$ and we say that $\mathcal B$ is $\Delta$-symmetric for largest coefficients.
	\end{definition}
	With that definition, the corresponding characterization of greedy bases can be found in \cite{DKOSZ}.
	
	\begin{theorem}[\cite{AW,DKOSZ}]\label{th2}
		A basis in a Banach space is greedy if and only if the basis is suppression unconditional and symmetric for largest coefficients. Quantitatively, 
		$$\max\lbrace K_s,\Delta\rbrace\leq C_g\leq K_s\Delta.$$
	\end{theorem}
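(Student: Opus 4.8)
The plan is to prove the two implications of Theorem \ref{th2} separately, carrying the constants along so as to obtain $\max\{K_s,\Delta\}\le C_g\le K_s\Delta$. (Merely the qualitative equivalence comes cheaply: setting $f=0$ in \eqref{defsym} shows that a $\Delta$-symmetric basis is super-democratic with a constant controlled by $\Delta$, so the super-democratic version of the Konyagin--Temlyakov characterisation, i.e.\ \eqref{estim}, already gives greediness; the point of the statement is the sharp value $K_s\Delta$.)

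\smallskip
\noindent\emph{Necessity.} Here I would feed into the greedy inequality \eqref{defgreedy} vectors engineered so that the greedy set is forced while a cheap competitor for $\sigma_m$ is visible. For $K_s\le C_g$: fix $0\ne f\in\X$ and a finite set $A$; we may assume $A\subseteq\supp(f)$, put $\varepsilon_n=\sgn(\x_n^*(f))$ for $n\in A$, choose $c>\Vert(\x_n^*(f))_n\Vert_\infty$, and set $g:=f+c\one_{\varepsilon A}$. Every coefficient of $g$ on $A$ has modulus $>c$ and every coefficient off $A$ has modulus $<c$, so $A$ is the greedy set of $g$ of order $m:=|A|$ and $g-\mathcal G_m(g)=f-P_A(f)$; since $g-c\one_{\varepsilon A}=f$ and $c\one_{\varepsilon A}$ has support of size $m$, one gets $\sigma_m(g)\le\Vert f\Vert$, and \eqref{defgreedy} delivers $\Vert f-P_A(f)\Vert\le C_g\Vert f\Vert$, which is \eqref{defuncon} with constant $C_g$. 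For $\Delta\le C_g$: given $f,A,B,\varepsilon,\eta$ as in \eqref{defsym} with $|A|\le|B|$, fix $\delta>0$ and set $g:=f+\one_{\varepsilon A}+(1+\delta)\one_{\eta B}$; the coefficients on $B$ are strictly the largest, so $B$ is the greedy set of $g$ of order $m:=|B|$, whence $g-\mathcal G_m(g)=f+\one_{\varepsilon A}$, while $v:=\one_{\varepsilon A}$ has support of size $|A|\le m$ with $g-v=f+(1+\delta)\one_{\eta B}$, so $\sigma_m(g)\le\Vert f+(1+\delta)\one_{\eta B}\Vert$. Then \eqref{defgreedy} gives $\Vert f+\one_{\varepsilon A}\Vert\le C_g\Vert f+(1+\delta)\one_{\eta B}\Vert$, and $\delta\to0^+$ yields \eqref{defsym} with constant $C_g$.

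\smallskip
\noindent\emph{Sufficiency.} Let $f\in\X$, $m\in\N$ and let $y$ be a competitor with $|\supp(y)|\le m$; let $A$ be a greedy set of $f$ of order $m$. We may assume $|\supp(f)|\ge m$ (otherwise $f-\mathcal G_m(f)=0$), so $|A|=m$, and also $B:=\supp(y)\subseteq\supp(f)$. The argument has two steps. \emph{Step 1 (unconditionality).} Since $P_{\N\setminus B}(y)=0$, we have $f-P_B(f)=(f-y)-P_B(f-y)$, hence $\Vert f-P_B(f)\Vert\le K_s\Vert f-y\Vert$ by \eqref{defuncon}; so it is enough to show $\Vert f-\mathcal G_m(f)\Vert\le\Delta\Vert f-P_B(f)\Vert$, because taking the infimum over $y$ then produces $\Vert f-\mathcal G_m(f)\Vert\le K_s\Delta\,\sigma_m(f)$. \emph{Step 2 (symmetry for largest coefficients).} Put $c:=\min_{n\in A}|\x_n^*(f)|$, $P:=A\setminus B$, $Q:=(\supp(f)\cap B)\setminus A$ and $h:=P_{\supp(f)\setminus(A\cup B)}(f)$. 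A direct check gives $f-\mathcal G_m(f)=h+P_Q(f)$ and $f-P_B(f)=h+P_P(f)$, where the supports of $h,P,Q$ are pairwise disjoint, $|Q|\le m-|A\cap B|=|P|$, the coefficients of $f$ on $Q$ and on $\supp(h)$ have modulus $\le c$, and those on $P$ have modulus $\ge c$. After dividing by $c$ and replacing the blocks $P_P(f),P_Q(f)$ by the height-one indicators $\one_{\eta P},\one_{\varepsilon Q}$ (with $\varepsilon,\eta$ the signs of $f$) sitting on top of $h/c$, whose coefficients have modulus $\le1$, the desired $\Vert h+P_Q(f)\Vert\le\Delta\Vert h+P_P(f)\Vert$ is exactly an instance of \eqref{defsym} (here one uses $|Q|\le|P|$, $P\cap Q=\emptyset$, and $\supp(h)\cap(P\cup Q)=\emptyset$).

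\smallskip
The step I expect to be the main obstacle is this last replacement of the genuine coefficient blocks $P_P(f),P_Q(f)$ by height-$c$ indicators: performed carelessly it costs an extra unconditionality factor on each side and only yields $C_g\le K_sK_u^2\Delta$. To keep the sharp $K_s\Delta$ I would, from the start, run the whole argument with $f$ replaced by its plateaued version $P_{\N\setminus A}(f)+c\one_{\varepsilon A}$ (where $\varepsilon_n=\sgn(\x_n^*(f))$), which changes neither $\mathcal G_m$ nor the residual and still admits $A$ as a greedy set, so that the block on $P$ becomes \emph{exactly} $c\one_{\eta P}$; the block on $Q$ must then be absorbed by a single, carefully localized application of \eqref{defuncon} incurring no spurious constant. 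This balancing of the two estimates is the heart of the argument worked out in \cite{AW,DKOSZ}. The remaining reductions used above --- to $f\ne0$, to $|\supp(f)|\ge m$ and $B\subseteq\supp(f)$, the resolution of ties in the greedy ordering by an arbitrarily small perturbation, and the limit $\delta\to0^+$ --- are routine.
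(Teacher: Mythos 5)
First, note that the paper does not prove Theorem \ref{th2} at all: it is imported verbatim from \cite{AW,DKOSZ}, so there is no in-paper proof to match yours against; the closest internal analogue is Proposition \ref{tech2}, which carries out the sharp argument in the special case $m=1$, $K_s=\Delta=1$. Your necessity direction is complete and correct, and is the standard argument: the perturbations $g=f+c\one_{\varepsilon A}$ and $g=f+\one_{\varepsilon A}+(1+\delta)\one_{\eta B}$ force the greedy set, exhibit the right competitor, and yield $K_s\le C_g$ and $\Delta\le C_g$ after $\delta\to0^+$.

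The gap is in the sufficiency direction, and you have in fact diagnosed it yourself: the factorization $\Vert f-\mathcal G_m(f)\Vert\le\Delta\Vert f-P_B(f)\Vert\le\Delta K_s\Vert f-y\Vert$ cannot be run with the middle constant equal to $\Delta$, because \eqref{defsym} only compares \emph{flat} (height-$c$) indicators, and while the $Q$-block $P_Q(f)$ is replaced by $c\one_{\delta Q}$ for free by convexity (the coefficients there have modulus $\le c$, so $h+P_Q(f)$ lies in the closed convex hull of the vectors $h+c\one_{\delta Q}$), the $P$-block goes the wrong way: passing from $c\one_{\eta P}$ back to $P_P(f)$ is a truncation and costs a further factor $K_s$, giving $K_s^2\Delta$. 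Your proposed repair (plateauing $f$ on $A$) is only gestured at and, as stated, still leaves open how $\sigma_m$ of the plateaued vector compares to $\sigma_m(f)$. The correct bookkeeping, which is exactly what Proposition \ref{tech2} does for $m=1$ via Corollaries \ref{corsym} and \ref{cor1}, is to abandon the intermediate quantity $\Vert f-P_B(f)\Vert$ altogether and apply Proposition \ref{bb} \emph{once} to $g=f-y$: with $A'=P=A\setminus B$, $B'=\supp(f-y)\setminus(A\cup B)$ and $t=c=\min_{n\in A}\vert\x_n^*(f)\vert\le\min_{n\in P}\vert\x_n^*(f-y)\vert$, it gives
\begin{equation*}
\bigl\Vert P_{(A\cup B)^c}(f)+c\one_{\eta P}\bigr\Vert=\bigl\Vert P_{B'}(f-y)+c\one_{\eta P}\bigr\Vert\le K_s\Vert f-y\Vert,
\end{equation*}
i.e.\ the deletion of the $B$-coefficients of $f-y$ and the truncation on $P$ are absorbed in a single application of suppression unconditionality. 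Chaining convexity (free), \eqref{defsym} (factor $\Delta$) and this one-shot estimate (factor $K_s$) yields $\Vert f-\mathcal G_m(f)\Vert\le K_s\Delta\,\sigma_m(f)$. With this reorganization your outline becomes a complete proof; as written, it only establishes $C_g\le K_s^2\Delta$.
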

	Then, with this new characterization, we can recover the case of $1$-greedy bases using bases that are $1$-suppression unconditional and $1$-symmetric for largest coefficients. The main goal of this paper is to analyze the main characterization of $1$-greedy bases in terms of the best $1$th error in the approximation and using the so called Property (Q$^*$).
	\begin{definition}[\cite{BB}]
		We say that a basis $\mathcal B$ in a Banach space $\X$ has the Property (Q$^*$) if there is $C>0$ such that
		\begin{eqnarray}\label{defq}
			\Vert f+\one_{\varepsilon A}\Vert\leq C\Vert f+y\Vert,
		\end{eqnarray}
		for any $f\in\X$, $y\in\mathbb X_f$, $A$ and $\varepsilon$ such that $\Vert (\x_n^*(f))\Vert_\infty\leq 1$, $\vert A\vert\leq \vert B\vert<\infty$ where the set $B$ is defined as $B=\lbrace n\in\text{supp}(y) : \vert\x_n^*(y)\vert=1\rbrace$, $\supp(f)\cap \supp(y)=\emptyset, \supp(f+y)\cap A=\emptyset$ and $\vert\varepsilon\vert=1$. The least constant verifying \eqref{defq} is denoted by $\mathcal Q[\mathcal B,\X]$ and we say that $\mathcal B$ has the Property (Q$^*$) with constant $\mathcal Q$.
	\end{definition}
	With that property, one of the results proved in \cite{BB} is the following one.
	
	\begin{theorem}\label{bb2}
		Let $\mathcal B$ be a basis in a Banach space. $\mathcal B$ is greedy if and only if the basis has the Property (Q$^*$). Quantitatively,
		$$\mathcal Q\leq C_g\leq \mathcal Q^2.$$
	\end{theorem}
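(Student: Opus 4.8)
The plan is to prove the two estimates separately: for $\mathcal Q\le C_g$ I will run the Thresholding Greedy Algorithm on a carefully perturbed vector, and for $C_g\le\mathcal Q^2$ I will reduce to Theorem \ref{th2} by extracting suppression unconditionality and symmetry for largest coefficients from Property (Q$^*$).

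For the bound $\mathcal Q\le C_g$, assume $\mathcal B$ is $C_g$-greedy and fix $f,y,A,\varepsilon$ satisfying the hypotheses in the definition of Property (Q$^*$). I would split $\supp(y)=B_{>}\cup B\cup B_{<}$ according to whether $|\x_n^*(y)|$ is $>1$, $=1$, or in $(0,1)$, note that $\supp(f)$, $\supp(y)$ and $A$ are pairwise disjoint (since $\supp(f)\cup\supp(y)=\supp(f+y)$ avoids $A$), and introduce $y'$ with $\supp(y')=\supp(y)$ which agrees with $y$ on $B_{>}\cup B$ and has modulus one on $B_{<}$ (say $\x_n^*(y')=\sgn(\x_n^*(y))$ there), so that every coefficient of $y'$ has modulus $\geq 1$. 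For $0<\delta<1$ I would set
$$G_\delta:=(1-\delta)\big(f+\one_{\varepsilon A}\big)+y'.$$
Since off $\supp(y)$ every coefficient of $G_\delta$ has modulus $\leq 1-\delta$ while on $\supp(y)$ every coefficient has modulus $\geq 1$, the $m$th greedy set of $G_\delta$ with $m:=|\supp(y)|$ is exactly $\supp(y)$, whence $G_\delta-\mathcal G_m(G_\delta)=(1-\delta)(f+\one_{\varepsilon A})$. Next, taking $z:=(1-\delta)\one_{\varepsilon A}+(y'-y)$, which is supported on $A\cup B_{<}$ and hence on at most $|A|+|B_{<}|\leq|B|+|B_{<}|+|B_{>}|=m$ coordinates — this is exactly where the hypothesis $|A|\leq|B|$ enters — one has $G_\delta-z=(1-\delta)f+y$, so $\sigma_m(G_\delta)\leq\Vert(1-\delta)f+y\Vert$. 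Combining with the greedy inequality gives $(1-\delta)\Vert f+\one_{\varepsilon A}\Vert\leq C_g\Vert(1-\delta)f+y\Vert$, and letting $\delta\to0^+$ yields $\Vert f+\one_{\varepsilon A}\Vert\leq C_g\Vert f+y\Vert$.

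For the bound $C_g\le\mathcal Q^2$, assume Property (Q$^*$) holds with constant $\mathcal Q$. To obtain suppression unconditionality I would fix $g\in\X$ and a finite set $S$, write $h:=g-P_S(g)$ and $w:=P_S(g)$, discard the trivial case $h=0$, normalize by $M:=\Vert(\x_n^*(h))_n\Vert_\infty$, and apply (Q$^*$) to $f:=h/M$, $y:=w/M$ with $A=\emptyset$ (all hypotheses being immediate, since $\one_{\varepsilon\emptyset}=0$), obtaining $\Vert h\Vert\leq\mathcal Q\Vert g\Vert$, i.e. $K_s\leq\mathcal Q$. To obtain symmetry for largest coefficients I would fix $f,A,B,\varepsilon,\eta$ as in \eqref{defsym} and apply (Q$^*$) to the same $f,A,\varepsilon$ together with $y:=\one_{\eta B}$: because $|\eta|=1$ the set distinguished in the definition of (Q$^*$) is precisely $B$, and the disjointness and cardinality conditions carry over from \eqref{defsym}, so $\Vert f+\one_{\varepsilon A}\Vert\leq\mathcal Q\Vert f+\one_{\eta B}\Vert$, i.e. $\Delta\leq\mathcal Q$. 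Theorem \ref{th2} then gives $C_g\leq K_s\Delta\leq\mathcal Q^2$.

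The delicate point, and where I expect the main difficulty to lie, is the construction in the first part. The naive candidate $f+\one_{\varepsilon A}+y$ does not work: any greedy set contained in $\supp(y)$ must consist of coefficients of modulus $\geq 1$ (the coefficient at every $n\in A$ has modulus $1$), so the greedy step cannot delete the part of $y$ with $|\x_n^*(y)|<1$, and leaving that part untouched would change the left-hand side $\Vert f+\one_{\varepsilon A}\Vert$. Replacing those small coefficients by unit-modulus ones in $y'$ makes $\supp(y)$ greedily removable, the cost being the correction term $y'-y$, which lives only on the at most $|B_{<}|$ coordinates of $B_{<}$; together with $A$ this still fits in the budget $m$ precisely because $|A|\leq|B|$ — exactly the balance that the definition of Property (Q$^*$) is tailored to. The rescaling factor $1-\delta$ is a minor device that makes $\supp(y)$ the $m$th greedy set without having to worry about ties between $A$ and $B$, and is harmlessly removed in the limit.
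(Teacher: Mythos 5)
Your proof is correct, but note that the paper itself offers no proof of Theorem \ref{bb2}: it is imported wholesale from \cite{BB}, so there is no in-text argument to compare against. Judged on its own, your argument is sound in both directions. For $\mathcal Q\le C_g$, the perturbation $G_\delta=(1-\delta)(f+\one_{\varepsilon A})+y'$ does make $\supp(y)$ the unique $m$th greedy set (coefficients of modulus $\ge 1$ on $\supp(y')$ versus $\le 1-\delta$ elsewhere, using the disjointness of $\supp(f)$, $\supp(y)$ and $A$), the competitor $z=(1-\delta)\one_{\varepsilon A}+(y'-y)$ is supported on $|A|+|B_<|\le |B|+|B_<|+|B_>|=m$ coordinates exactly as you say, and the limit $\delta\to 0^+$ is harmless; this is the step the present paper never carries out, and your accounting of why $|A|\le|B|$ is the right hypothesis is the essential point. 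For $C_g\le\mathcal Q^2$, your extraction of $K_s\le\mathcal Q$ (take $A=\emptyset$ after normalizing by $\Vert(\x_n^*(h))_n\Vert_\infty$) and $\Delta\le\mathcal Q$ (take $y=\one_{\eta B}$, so the distinguished set of unit-modulus coefficients is exactly $B$), followed by the bound $C_g\le K_s\Delta$ of Theorem \ref{th2}, is precisely the route the paper itself takes in the special case $\mathcal Q=1$ in Proposition \ref{tech}; your version is the quantitative form of that reduction for general $\mathcal Q$. In short: the second half parallels what the paper does elsewhere for the constant-one case, and the first half correctly supplies the argument the paper delegates to \cite{BB}.
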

	
	As we can observe, using the Property (Q$^*$) with constant $1$, it is possible to recover $1$-greediness. Here, the main result that we show is the following one.
	
	\begin{theorem}\label{main}
		Let $\mathcal B$ be a basis in a Banach space $\X$. The following are equivalent:
		\begin{itemize}
			\item[a)] The basis is $1$-greedy.
			\item[b)] For any $f\in\X$,
			\begin{eqnarray}\label{proof1}
				\Vert f-\mathcal G_1(f)\Vert= \sigma_1(f).
			\end{eqnarray}
			\item[c)] The basis is $1$-symmetric for largest coefficients and $1$-suppression unconditional.
			\item[d)] The basis has the Property (Q$^*$) with constant $\mathcal Q=1$.
		\end{itemize}
	\end{theorem}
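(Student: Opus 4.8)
The plan is to establish the cycle $(a)\Rightarrow(b)\Rightarrow(c)\Rightarrow(a)$ together with the equivalence $(a)\Leftrightarrow(d)$, putting essentially all of the work into the single implication $(b)\Rightarrow(c)$; the remaining arrows are short consequences of results already quoted. For $(a)\Rightarrow(b)$: since $\mathcal G_1(f)=\x_{\rho(1)}^*(f)\x_{\rho(1)}$ is supported on at most one index, it is an admissible competitor in $\sigma_1(f)$, so $\sigma_1(f)\le\Vert f-\mathcal G_1(f)\Vert$ always, while $1$-greediness gives the reverse inequality. For $(c)\Rightarrow(a)$, Theorem~\ref{th2} yields $1\le C_g\le K_s\Delta=1$. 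For $(a)\Leftrightarrow(d)$, Theorem~\ref{bb2} gives $\mathcal Q\le C_g\le\mathcal Q^2$, so $C_g=1$ is equivalent to $\mathcal Q=1$. (Throughout we use the trivial lower bounds $C_g,K_s,\Delta,\mathcal Q\ge1$.)

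The substance is $(b)\Rightarrow(c)$, and the key device is a scaling trick that forces a chosen coordinate to become the first greedy pick, thereby converting \eqref{proof1} into norm comparisons between other configurations. First I would prove $1$-suppression unconditionality. Fix $f$ and $j\in\supp(f)$; write $f=f'+\x_j^*(f)\x_j$ with $j\notin\supp(f')$, and for $\lambda>\Vert(\x_n^*(f'))_n\Vert_\infty$ set $f_\lambda:=f'+\lambda\frac{\x_j^*(f)}{\vert\x_j^*(f)\vert}\x_j$. Since every other coefficient of $f_\lambda$ has modulus $<\lambda$, the index $j$ is the unique first greedy pick, so $\mathcal G_1(f_\lambda)=\lambda\frac{\x_j^*(f)}{\vert\x_j^*(f)\vert}\x_j$ and $f_\lambda-\mathcal G_1(f_\lambda)=f'$; then \eqref{proof1} gives $\Vert f'\Vert=\sigma_1(f_\lambda)\le\Vert f_\lambda-c'\x_j\Vert$ for every scalar $c'$, and as $c'$ varies the right-hand side runs over all $\Vert f'+c\x_j\Vert$, so $\Vert f'\Vert\le\Vert f'+c\x_j\Vert$ for every $c$; taking $c=\x_j^*(f)$ gives $\Vert f-P_{\{j\}}(f)\Vert\le\Vert f\Vert$. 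Writing $f-P_A(f)$ as the composition of the one-coordinate suppressions $I-P_{\{j\}}$ over $j\in A$ and applying this bound repeatedly — which is legitimate precisely because the constant is exactly $1$ — yields $\Vert f-P_A(f)\Vert\le\Vert f\Vert$, i.e. $K_s=1$.

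Next I would obtain $1$-symmetry for largest coefficients by the same mechanism. For disjoint singletons $A=\{a\}$, $B=\{b\}$ with $a,b\notin\supp(f)$, $\Vert(\x_n^*(f))_n\Vert_\infty\le1$, and $\vert\varepsilon\vert=\vert\eta\vert=1$, set $h_\lambda:=f+\lambda\eta\x_b+\varepsilon\x_a$ with $\lambda>1$. Then $b$ is the unique first greedy pick, so by \eqref{proof1} $\sigma_1(h_\lambda)=\Vert f+\varepsilon\x_a\Vert$; comparing $\sigma_1(h_\lambda)$ with the competitor $\varepsilon\x_a$ gives $\sigma_1(h_\lambda)\le\Vert h_\lambda-\varepsilon\x_a\Vert=\Vert f+\lambda\eta\x_b\Vert$, and letting $\lambda\downarrow1$ (continuity of the norm) produces $\Vert f+\varepsilon\x_a\Vert\le\Vert f+\eta\x_b\Vert$. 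For general $A,B$ with $\vert A\vert=\vert B\vert$ I would exchange the coordinates of $B$ for those of $A$ one at a time; the hypotheses $A\cap B=\emptyset$ and $\supp(f)\cap(A\cup B)=\emptyset$ guarantee that at each stage the unchanged part still has all coefficients of modulus $\le1$ and all the relevant supports disjoint, so the singleton case applies. For $\vert A\vert<\vert B\vert$, first shrink $B$ to a subset $B'$ of size $\vert A\vert$ using $K_s=1$, namely $\Vert f+\one_{\eta B'}\Vert=\Vert(f+\one_{\eta B})-P_{B\setminus B'}(f+\one_{\eta B})\Vert\le\Vert f+\one_{\eta B}\Vert$, and then invoke the equal-cardinality case. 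This gives $\Delta=1$, so $(c)$ follows.

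The main obstacle is exactly this passage from the one-term identity \eqref{proof1} to statements about arbitrarily many coordinates: the auxiliary large coefficient $\lambda$ and the limit $\lambda\downarrow1$ are what let the hypothesis bite, and it is crucial that each intermediate constant is exactly $1$ (not merely bounded) so that the iterations over the elements of $A$ and $B$ do not accumulate loss. Everything else is bookkeeping with disjoint supports, together with the already-quoted Theorems~\ref{th2} and~\ref{bb2}.
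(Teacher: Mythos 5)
Your proof is correct, and the one substantive implication, b) $\Rightarrow$ c), is carried out by exactly the paper's device (the second half of Proposition~\ref{tech2}): inflate one coefficient so that a prescribed index becomes the unique first greedy pick, compare $\Vert f-\mathcal G_1(f)\Vert=\sigma_1(f)$ against a single-term competitor, and let the inflation parameter tend to its limiting value; your $f_\lambda$ and $h_\lambda$ are the paper's $g$ and $h$ up to notation, and the passage from the one-coordinate statements to full $1$-suppression unconditionality and full symmetry for largest coefficients is the same iteration that underlies Proposition~\ref{1un} and Proposition~\ref{aa} (the latter of which you re-prove by hand rather than cite). Where you genuinely diverge is the global architecture: the paper runs a) $\Rightarrow$ b), b) $\Leftrightarrow$ c) (both directions of Proposition~\ref{tech2}, the direction c) $\Rightarrow$ b) needing Corollaries~\ref{corsym} and~\ref{cor1}), c) $\Leftrightarrow$ d) (Proposition~\ref{tech}, which rests on the singleton reformulation of Property (Q$^*$) in Theorem~\ref{1sym}), and d) $\Rightarrow$ a) via Theorem~\ref{bb2}; you instead close the cycle with c) $\Rightarrow$ a) by quoting $C_g\le K_s\Delta$ from Theorem~\ref{th2} and attach d) through the two-sided estimate $\mathcal Q\le C_g\le \mathcal Q^2$ of Theorem~\ref{bb2} together with the trivial bounds $C_g,\mathcal Q\ge 1$. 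Your routing is the more economical one, since it dispenses with Corollaries~\ref{corsym} and~\ref{cor1} and with any direct comparison between Property (Q$^*$) and the pair (unconditionality, symmetry); the cost is that c) $\Leftrightarrow$ d) is then obtained only through the external Theorems~\ref{th2} and~\ref{bb2}, whereas the paper's Proposition~\ref{tech} exhibits that equivalence by a short self-contained argument. Both decompositions are sound.
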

	To show the proof, we will establish new characterizations of $1$-unconditionality and the Property (Q$^*$) with $\mathcal Q=1$. The structure of the paper is as follows: in Section \ref{section2}, we give new and known equivalences of $1$-suppression unconditional bases and the so called symmetry for largest coefficients. In Section \ref{section3}, we talk about one equivalence of the Property (Q$^*$) with constant 1 based on sets of one element. Finally, in Section \ref{section4} we show the proof of Theorem \ref{main} and Section \ref{section5} contains a remark about one consequence of Theorem \ref{main}.

	\section{Technical results}\label{section2}
	First of all, we study one characterization of $1$-suppression unconditional bases.
	
	\begin{prop}\label{1un}
		Let $\mathcal B$ be a basis in a Banach space $\X$. Then, $\mathcal B$ is $1$-suppression unconditional if and only if 
		\begin{eqnarray}\label{1u}
			\Vert f-\x_n^*(f)\x_n\Vert \leq \Vert f\Vert,\; \forall f\in\mathbb X, \forall n\in \text{supp}(f).
		\end{eqnarray}
	\end{prop}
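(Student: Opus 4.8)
The forward implication needs no work: if $\mathcal B$ is $1$-suppression unconditional, then applying \eqref{defuncon} with the singleton $A=\{n\}$, for which $P_A(f)=\x_n^*(f)\x_n$, gives exactly \eqref{1u}. So the content of the proposition is the converse, and the plan is to bootstrap the one-coordinate estimate \eqref{1u} up to arbitrary finite index sets by induction on $|A|$.

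First I would reduce to the case $A\subseteq\supp(f)$: for a general finite set $A$ one has $P_A(f)=P_{A\cap\supp(f)}(f)$, so proving $\Vert f-P_A(f)\Vert\le\Vert f\Vert$ for subsets of $\supp(f)$ suffices. The base case $|A|=1$ is precisely the hypothesis \eqref{1u}. For the inductive step, take $A=\{n_1,\dots,n_k\}\subseteq\supp(f)$ with $k\ge 2$, put $g:=f-\x_{n_1}^*(f)\x_{n_1}$ and $A':=\{n_2,\dots,n_k\}$. The key (and only) observation is that passing from $f$ to $g$ annihilates the $n_1$-th coefficient and leaves every other coefficient untouched: $\x_{n_1}^*(g)=0$ while $\x_j^*(g)=\x_j^*(f)$ for $j\ne n_1$. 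Hence $P_{A'}(g)=P_{A'}(f)$ and therefore $g-P_{A'}(g)=f-\x_{n_1}^*(f)\x_{n_1}-P_{A'}(f)=f-P_A(f)$, with $A'\subseteq\supp(g)$ of cardinality $k-1$.

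Applying the induction hypothesis to $g$ and $A'$ yields $\Vert f-P_A(f)\Vert=\Vert g-P_{A'}(g)\Vert\le\Vert g\Vert$, and one further use of \eqref{1u} (for $f$ at the index $n_1\in\supp(f)$) gives $\Vert g\Vert=\Vert f-\x_{n_1}^*(f)\x_{n_1}\Vert\le\Vert f\Vert$. Chaining these two inequalities closes the induction and proves that $\mathcal B$ is $1$-suppression unconditional. I do not expect a genuine obstacle here; the argument is a clean telescoping induction, and the only points requiring a moment's care are the reduction to $A\subseteq\supp(f)$ and the verification that deleting a single coefficient does not disturb the remaining ones, so that the projection $P_{A'}$ commutes with the deletion.
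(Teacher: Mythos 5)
Your proof is correct and uses essentially the same telescoping induction as the paper: remove one coordinate at a time, noting that deleting the $n_1$-th coefficient leaves the others unchanged, and apply \eqref{1u} at each stage. The only (harmless) difference is that the paper first runs the induction for finitely supported $f$ and then appends a density argument, whereas your version inducts on $\vert A\vert$ directly for arbitrary $f\in\X$, which is legitimate since \eqref{1u} is assumed for all $f\in\X$ and renders the density step unnecessary.
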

	\begin{proof}
		Of course, if $\mathcal B$ is $1$-suppression unconditional, then we have \eqref{1u}. Assume now \eqref{1u} and take $f\in\mathbb X$ with finite support, $A\subseteq\text{supp}(f)$ finite with $A=\lbrace n_1,n_2,\dots,n_m\rbrace$. Then, if we define $g=f-\x_{n_1}^*(f)\x_{n_1}$,
		\begin{eqnarray}
			\nonumber	\left\Vert f-\sum_{j=1}^{2}\x_{n_j}^*(f)\x_{n_j}\right\Vert &=& \left\Vert g-\x_{n_2}^*(g)\right\Vert\underset{\eqref{1u}}{\leq}\Vert g\Vert\\
			\nonumber	&=&\left\Vert f-\x_{n_1}^*(f)\x_{n_1}\right\Vert\underset{\eqref{1u}}{\leq}\Vert f\Vert.
		\end{eqnarray}
		Assume now that
		\begin{eqnarray}\label{2u}
			\left\Vert f-\sum_{j=1}^{m-1}\x_{n_j}^*(f)\x_{n_j}\right\Vert\leq \Vert f\Vert.
		\end{eqnarray}
		Then, if we take $h= f-\sum_{j=1}^{m-1}\x_{n_j}^*(f)\x_{n_j}$,
		\begin{eqnarray}
			\nonumber	\left\Vert f-\sum_{j=1}^{m}\x_{n_j}^*(f)\x_{n_j}\right\Vert &=& \left\Vert h-\x_{n_m}^*(h)\right\Vert\underset{\eqref{1u}}{\leq}\Vert h\Vert\\
			\nonumber	&=&\left\Vert f-\sum_{j=1}^{m-1}\x_{n_j}^*(f)\x_{n_j}\right\Vert\underset{\eqref{2u}}{\leq}\Vert f\Vert.
		\end{eqnarray}
		Hence, the basis is $1$-suppression-unconditional for elements with finite support. Now, applying density, we can show that basis is $1$-suppression-unconditional: take $f\in\mathbb X$ and $g\in\mathbb X$ with finite support such that $\Vert f-g\Vert<\varepsilon$ for $\varepsilon>0$. Hence,
		\begin{eqnarray*}
			\Vert f-P_A(f)\Vert&=&\Vert f-g+g-P_A(g)+P_A(g)-P_A(f)\Vert\\
			&\leq&\Vert f-g\Vert+\Vert g-P_A(g)\Vert+\Vert P_A(f-g)\Vert\\
			&\leq& (1+\Vert P_A\Vert)\Vert f-g\Vert+\Vert g\Vert\\
			&\leq& (2+\Vert P_A\Vert)\Vert f-g\Vert+\Vert f\Vert\\
			&<&(2+\Vert P_A\Vert)\varepsilon+\Vert f\Vert.
		\end{eqnarray*}
		Taking now $\varepsilon\rightarrow 0$, we obtain the result.
	\end{proof}
	The same idea works to prove the following result for quasi-greedy bases.
	\begin{prop}
		Let $\mathcal B$ be a basis in a Banach space $\mathbb X$. Then, $\mathcal B$ is $1$-quasi-greedy if and only if
		\begin{eqnarray}\label{qg1}
			\Vert f-\mathcal G_1(f)\Vert\leq \Vert f\Vert,\; \forall f\in\mathbb X,
		\end{eqnarray}
	\end{prop}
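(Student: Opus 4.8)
The plan is to mimic the inductive argument used for Proposition \ref{1un}. One implication is immediate: if $\mathcal B$ is $1$-quasi-greedy, then specializing the defining inequality to $m=1$ yields \eqref{qg1}. For the converse, I assume \eqref{qg1} and prove by induction on $m\in\N$ the statement $P(m)$: ``$\Vert f-\mathcal G_m(f)\Vert\leq\Vert f\Vert$ for every $f\in\mathbb X$''. The base case $P(1)$ is exactly \eqref{qg1}.

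For the inductive step, assume $P(m-1)$ and fix $f\in\mathbb X$ with natural greedy ordering $\rho$. If $\vert\supp(f)\vert\leq m$ then $\mathcal G_m(f)=f$ and there is nothing to prove, so assume $\vert\supp(f)\vert\geq m+1$; in particular $\rho(1)\in\supp(f)$. Set $g:=f-\mathcal G_1(f)=f-\x_{\rho(1)}^*(f)\x_{\rho(1)}$. The key observation is that
$$g-\mathcal G_{m-1}(g)=f-\mathcal G_m(f).$$
To see this, note that $g$ has the same coordinates as $f$ except that its $\rho(1)$-th coordinate is $0$; hence the sequence $\rho(2),\rho(3),\dots$ is a valid natural greedy ordering for $g$ (deleting the leading index does not disturb the relative order, nor the index tie-breaking, of the remaining indices), so $\mathcal G_{m-1}(g)=\sum_{j=2}^{m}\x_{\rho(j)}^*(f)\x_{\rho(j)}$ and therefore $\mathcal G_m(f)=\mathcal G_1(f)+\mathcal G_{m-1}(g)$. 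Combining $P(m-1)$ applied to $g$ with \eqref{qg1} applied to $f$ gives
$$\Vert f-\mathcal G_m(f)\Vert=\Vert g-\mathcal G_{m-1}(g)\Vert\leq\Vert g\Vert=\Vert f-\mathcal G_1(f)\Vert\leq\Vert f\Vert,$$
which is $P(m)$, and the induction closes.

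The main (and essentially only) point requiring care is the bookkeeping around the greedy ordering in the inductive step: one must check that applying one greedy step and then $m-1$ greedy steps reproduces exactly $\mathcal G_m(f)$, which rests on the fact that the natural greedy ordering is uniquely determined on $\supp(f)$ and that removing the leading term preserves the order of the remaining coordinates. In contrast to Proposition \ref{1un}, no approximation/density argument is needed here, because the hypothesis \eqref{qg1} is already assumed for every $f\in\mathbb X$ and the induction is run directly over all of $\mathbb X$; the only separate case is the harmless one $\vert\supp(f)\vert\leq m$, where the greedy projection is the identity.
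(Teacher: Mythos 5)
Your proof is correct and takes essentially the same route as the paper: an induction on $m$ resting on the identity that one greedy step followed by $m-1$ greedy steps reproduces $\mathcal G_m(f)$. The only cosmetic difference is the direction of peeling --- you remove the first greedy term and apply the induction hypothesis to $g=f-\mathcal G_1(f)$, while the paper removes the first $m-1$ terms and applies \eqref{qg1} to $f-\mathcal G_{m-1}(f)$ --- and you make explicit the bookkeeping about the natural greedy ordering that the paper leaves implicit.
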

	\begin{proof}
		It is only necessary to show that \eqref{qg1} implies quasi-greediness. For that, taking $m\in\N$ and $A_m(f)$ a greedy set of cardinality $m$, if $f_1=f-\mathcal G_1(f)$,
		\begin{eqnarray*}
			\Vert f-\mathcal G_2(f)\Vert=\Vert f_1-\mathcal G_1(f_1)\Vert\underset{\eqref{qg1}}{\leq}\Vert f_1\Vert=\Vert f-\mathcal G_1(f)\Vert\underset{\eqref{qg1}}{\leq}\Vert f\Vert.
		\end{eqnarray*}
		Now, assume that
		\begin{eqnarray}\label{qg2}
			\Vert f-\mathcal G_{m-1}(f)\Vert\leq \Vert f\Vert.
		\end{eqnarray}
		Taking $f_2=f-\mathcal G_{m-1}(f)$,
		\begin{eqnarray*}
			\Vert f-\mathcal G_m(f)\Vert=\Vert f_2-\mathcal G_1(f_2)\Vert\underset{\eqref{qg1}}{\leq}\Vert f_2\Vert\underset{\eqref{qg2}}{\leq}\Vert f\Vert,
		\end{eqnarray*}
		so the basis is $1$-quasi-greedy.
	\end{proof}
	Also, respect to unconditionality, we can found the following result in \cite{BB}.
	\begin{prop}\label{bb}
		Let $\mathcal B$ be a $K_s$-suppression unconditional basis of a Banach space $\mathbb X$. Let $f\in\mathbb X$, $A\subseteq\text{supp}(f)$ and $\varepsilon_n=\dfrac{\x_n^*(f)}{\vert\x_n^*(f)\vert}$ for $n\in A$. Then,
		$$\left\Vert\sum_{j\in B}\x_j^*(f)\x_j+t\mathbf 1_{\varepsilon A}\right\Vert\leq K_s\Vert f\Vert,$$
		for each $B\subset \text{supp}(f)\setminus A$ and $t\leq \min\lbrace\vert\x_n^*(f)\vert : n\in A\rbrace$.
	\end{prop}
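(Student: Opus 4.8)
The plan is to exhibit the vector $\sum_{j\in B}\x_j^*(f)\x_j+t\one_{\varepsilon A}$ as a convex combination of suppression projections of $f$, and then to apply the triangle inequality together with $K_s$-suppression unconditionality. The mechanism is this: for $n\in A$ one has $t\varepsilon_n=\lambda_n\,\x_n^*(f)$, where $\lambda_n:=t/\vert\x_n^*(f)\vert$; this is well defined since $n\in\supp(f)$, and $\lambda_n\in[0,1]$ precisely because $t\le\min_{m\in A}\vert\x_m^*(f)\vert$. Since any number in $[0,1]$ is a convex combination of $0$ and $1$, performing this splitting independently over all $n\in A$ turns the target vector into an expectation over a product probability measure on the subsets of $A$.

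Concretely, the steps would be as follows. First reduce to the case $A$ finite (if $A$ is infinite then $(\x_n^*(f))_n\in c_0$ forces $t=0$, and the claim is merely $\Vert P_B(f)\Vert\le K_s\Vert f\Vert$), and take $t\ge0$. For $S\subseteq A$ introduce $w_S:=\prod_{n\in S}\lambda_n\prod_{n\in A\setminus S}(1-\lambda_n)\ge0$ and $f_S:=P_{B\cup S}(f)=\sum_{j\in B}\x_j^*(f)\x_j+\sum_{n\in S}\x_n^*(f)\x_n$. Next, check the two identities $\sum_{S\subseteq A}w_S=1$ and $\sum_{S\ni n}w_S=\lambda_n$ for each $n\in A$; both follow by expanding $\prod_{n\in A}\big((1-\lambda_n)+\lambda_n\big)=1$. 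Then, collecting coefficients, deduce that $\sum_{S\subseteq A}w_S f_S$ has $\x_j$-coefficient $\x_j^*(f)$ for $j\in B$ and $\x_n$-coefficient $\lambda_n\x_n^*(f)=t\varepsilon_n$ for $n\in A$, i.e.\ $\sum_{S\subseteq A}w_S f_S=\sum_{j\in B}\x_j^*(f)\x_j+t\one_{\varepsilon A}$. Finally, bound $\Vert f_S\Vert=\Vert P_{B\cup S}(f)\Vert\le K_s\Vert f\Vert$ by suppression unconditionality, so that convexity of the norm gives $\Vert\sum_{j\in B}\x_j^*(f)\x_j+t\one_{\varepsilon A}\Vert\le\sum_{S\subseteq A}w_S\Vert f_S\Vert\le K_s\Vert f\Vert$.

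I do not expect a genuine obstacle, since the whole proof rests on one algebraic identity; the only point deserving care is the inequality $\Vert P_{B\cup S}(f)\Vert\le K_s\Vert f\Vert$. In the form \eqref{defuncon}, suppression unconditionality bounds $\Vert f-P_E(f)\Vert$ for finite $E$; applied to a finitely supported vector $g$ approximating $f$, with $E$ the complement of $B\cup S$ inside $\supp(g)$, this yields $\Vert P_F(g)\Vert\le K_s\Vert g\Vert$ for finitely supported $g$ and finite $F$, and a routine density argument then extends it to all of $\X$ (with a limit over $P_{(B\cup S)\cap\{1,\dots,N\}}(f)$ covering an infinite $B$, using that a suppression unconditional basis is unconditional, so that the expansion of $f$ converges unconditionally). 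One must also keep $t$ real and nonnegative, so that the $w_S$ are nonnegative and the last inequality is a genuine convex-combination bound and not merely a triangle-inequality estimate with the possibly larger constant $\sum_{S}\vert w_S\vert$.
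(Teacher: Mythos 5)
Your proof is correct. Note that the paper does not actually prove Proposition \ref{bb}; it is quoted from the reference \cite{BB} without proof, so there is nothing in the text to compare against line by line. Your argument --- writing $\sum_{j\in B}\x_j^*(f)\x_j+t\one_{\varepsilon A}$ as the convex combination $\sum_{S\subseteq A}w_S P_{B\cup S}(f)$ with $w_S=\prod_{n\in S}\lambda_n\prod_{n\in A\setminus S}(1-\lambda_n)$ and $\lambda_n=t/\vert\x_n^*(f)\vert$, then invoking $\Vert P_{B\cup S}(f)\Vert\le K_s\Vert f\Vert$ --- is exactly the standard convexity device used to prove this kind of statement in the cited source and throughout the greedy-approximation literature, and the algebra checks out (including the complex-scalar case, since $t\varepsilon_n=\lambda_n\x_n^*(f)$ with $\lambda_n$ real). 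You are also right to flag the two points the statement glosses over: the bound $\Vert P_F(f)\Vert\le K_s\Vert f\Vert$ requires the finite-support-plus-density argument because \eqref{defuncon} only controls $f-P_E(f)$ for finite $E$, and the hypothesis must implicitly be $0\le t$, since for $t<0$ the weights $w_S$ are no longer a probability distribution and the conclusion can fail for a basis that is $1$-suppression unconditional but not $1$-unconditional.
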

	
	\begin{cor}\label{cor1}
		Let $\mathcal B$ be a $1$-suppression unconditional basis in a Banach space $\X$. Then, if $\varepsilon_m=\dfrac{\x_m^*(f)}{\vert\x_m^*(f)\vert}$ with $\lbrace m\rbrace \in \text{supp}(f)$, we have
		$$\left\Vert\sum_{j\in B}\x_j^*(f)\x_j+t\varepsilon_m\x_{m}\right\Vert\leq \Vert f\Vert,$$
		for each $B\subset \text{supp}(f)\setminus \lbrace m\rbrace$ and $t\leq \vert\x_m^*(f)\vert$.
	\end{cor}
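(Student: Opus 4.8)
The plan is to obtain Corollary~\ref{cor1} as the $A=\{m\}$ instance of Proposition~\ref{bb}. Apply that proposition with $A=\{m\}$: since $m\in\supp(f)$ the hypothesis $A\subseteq\supp(f)$ holds, the prescribed sign on $A$ is exactly $\varepsilon_m=\x_m^*(f)/|\x_m^*(f)|$, one has $\min\{|\x_n^*(f)|:n\in A\}=|\x_m^*(f)|$, and the indicator sum over the singleton is $\one_{\varepsilon\{m\}}=\varepsilon_m\x_m$. Because $\mathcal B$ is assumed $1$-suppression unconditional, we may take $K_s=1$. Thus for every $B\subset\supp(f)\setminus\{m\}$ and every $t\le|\x_m^*(f)|$, Proposition~\ref{bb} gives
$$\left\Vert\sum_{j\in B}\x_j^*(f)\x_j+t\,\varepsilon_m\x_m\right\Vert\le K_s\Vert f\Vert=\Vert f\Vert,$$
which is precisely the assertion. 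So this route is pure bookkeeping, with no genuine obstacle, and it is the one I would write down.

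For completeness I would also mention a self-contained argument that does not cite Proposition~\ref{bb}. First reduce to $f$ of finite support by exactly the density argument used in the proof of Proposition~\ref{1un}; for such $f$ and any finite $C$ one has $P_C(f)=f-P_{\supp(f)\setminus C}(f)$ with $\supp(f)\setminus C$ finite, so $1$-suppression unconditionality yields $\Vert P_C(f)\Vert\le\Vert f\Vert$. Then fix $B$ and consider $\varphi(t)=\bigl\Vert\sum_{j\in B}\x_j^*(f)\x_j+t\,\varepsilon_m\x_m\bigr\Vert$, which is convex in $t$ as the norm of an affine $\X$-valued function of $t$. One checks $\varphi(0)=\Vert P_B(f)\Vert\le\Vert f\Vert$ and $\varphi(|\x_m^*(f)|)=\Vert P_{B\cup\{m\}}(f)\Vert\le\Vert f\Vert$, and convexity on $[0,|\x_m^*(f)|]$ forces $\varphi(t)\le\max\{\varphi(0),\varphi(|\x_m^*(f)|)\}\le\Vert f\Vert$ for $0\le t\le|\x_m^*(f)|$. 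The only mildly delicate point in this second route is the inequality $\Vert P_C(f)\Vert\le\Vert f\Vert$, which is exactly where the finite-support reduction is required; since Proposition~\ref{bb} already packages all of this, I would present the corollary via the first, one-line route.
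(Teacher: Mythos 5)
Your main argument is exactly the paper's proof: the corollary is obtained by applying Proposition~\ref{bb} with $A=\{m\}$ and $K_s=1$, and your bookkeeping of the hypotheses is correct. The additional self-contained convexity argument is a nice sanity check but not needed; the one-line route you prefer is the one the paper uses.
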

	\begin{proof}
		Just apply the last proposition with $A=\lbrace m\rbrace$ and $K_s=1$.
	\end{proof}

	Considering now the symmetry for largest coefficients, the following result is well known.
	
	\begin{prop}[\cite{AA}]\label{aa}
		A basis is $1$-symmetric for largest coefficients if and only if
		$$\Vert f+\varepsilon_n \x_n\Vert=\Vert f+\eta_j\x_j\Vert,$$
		for every $f\in\mathbb X$ with $\Vert (\x_n^*(f))_n\Vert_\infty\leq 1$, $\vert\varepsilon_n\vert=\vert\eta_j\vert=1$ with $n,j\not\in\text{supp}(f)$.
	\end{prop}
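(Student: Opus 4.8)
The plan is to prove the two implications separately. The forward direction is immediate; the reverse direction I would break into a monotonicity step coming for free from the one-coordinate identity, an equal-cardinality case handled by swapping indices one at a time, and a reduction that glues the two together to recover the general inequality \eqref{defsym} with constant $1$.

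\emph{Necessity.} Assume $\mathcal B$ is $1$-symmetric for largest coefficients and fix $f$ with $\Vert(\x_n^*(f))_n\Vert_\infty\le 1$, indices $n,j\notin\supp(f)$ and unimodular scalars $\varepsilon_n,\eta_j$. If $n\neq j$ then $A=\{n\}$ and $B=\{j\}$ are disjoint, disjoint from $\supp(f)$, and of equal size, so \eqref{defsym} with constant $1$ applied in both directions (exchanging $A$ and $B$) gives $\Vert f+\varepsilon_n\x_n\Vert=\Vert f+\eta_j\x_j\Vert$. If $n=j$, I would choose an auxiliary index $k\notin\supp(f)\cup\{n\}$, which exists since the basis is indexed by $\N$, and chain $\Vert f+\varepsilon_n\x_n\Vert=\Vert f+\x_k\Vert=\Vert f+\eta_n\x_n\Vert$ using the case already proved.

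\emph{Sufficiency.} Assume the identity in the statement; call it (S). First I would note that (S) already encodes a monotonicity property: taking $n=j=b$ in (S) with opposite signs yields $\Vert g+\eta\x_b\Vert=\Vert g-\eta\x_b\Vert$ whenever $\Vert(\x_n^*(g))_n\Vert_\infty\le 1$ and $b\notin\supp(g)$, and then midpoint convexity of the norm gives
\begin{eqnarray*}
\Vert g\Vert=\left\Vert\tfrac12(g+\eta\x_b)+\tfrac12(g-\eta\x_b)\right\Vert\le\Vert g+\eta\x_b\Vert.
\end{eqnarray*}
Since adjoining a unimodular coefficient on a fresh index keeps the supremum of the coefficients at most $1$, iterating this inequality shows $\Vert g\Vert\le\Vert g+\one_{\eta C}\Vert$ for every finite $C$ disjoint from $\supp(g)$ and every $\vert\eta\vert=1$. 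Next I would treat the equal-cardinality case of \eqref{defsym}: for $|A|=|B|=m$, $A\cap B=\emptyset$, $(A\cup B)\cap\supp(f)=\emptyset$, list $A=\{a_1,\dots,a_m\}$, $B=\{b_1,\dots,b_m\}$ and replace $\varepsilon_{a_k}\x_{a_k}$ by $\eta_{b_k}\x_{b_k}$ one $k$ at a time; at stage $k$ the intermediate vector $f+\sum_{i<k}\eta_{b_i}\x_{b_i}+\sum_{i>k}\varepsilon_{a_i}\x_{a_i}$ has all its coefficients of modulus at most $1$, supported on pairwise disjoint indices, and $a_k,b_k$ both lie outside its support, so (S) applies and the norm is unchanged. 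Chaining the $m$ equalities gives $\Vert f+\one_{\varepsilon A}\Vert=\Vert f+\one_{\eta B}\Vert$.

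Finally, for the general case $|A|\le|B|$ of \eqref{defsym}, write $B=B_1\sqcup B_2$ with $|B_1|=|A|$ and set $\tilde f=f+\one_{\eta B_2}$, which still satisfies $\Vert(\x_n^*(\tilde f))_n\Vert_\infty\le 1$ and has support disjoint from $A\cup B_1$. The equal-cardinality step gives $\Vert f+\one_{\varepsilon A}+\one_{\eta B_2}\Vert=\Vert\tilde f+\one_{\varepsilon A}\Vert=\Vert\tilde f+\one_{\eta B_1}\Vert=\Vert f+\one_{\eta B}\Vert$, while the monotonicity step applied to $g=f+\one_{\varepsilon A}$ and $C=B_2$ gives $\Vert f+\one_{\varepsilon A}\Vert\le\Vert f+\one_{\varepsilon A}+\one_{\eta B_2}\Vert$; combining the two yields $\Vert f+\one_{\varepsilon A}\Vert\le\Vert f+\one_{\eta B}\Vert$, i.e.\ \eqref{defsym} with constant $1$. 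I expect the only genuinely non-routine point to be the realization that (S) — a statement only about vectors obtained by \emph{adding} a single coefficient — secretly contains the monotonicity needed to absorb the surplus indices of $B$; the sign-flip-plus-convexity observation is what unlocks that, and the remainder is bookkeeping about disjoint supports and moduli.
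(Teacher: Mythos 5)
The paper does not actually prove this proposition; it is quoted from \cite{AA} without proof, so there is no internal argument to compare against. Your reconstruction is correct and follows the standard route (it is essentially the argument of Albiac--Ansorena): the forward direction is immediate from applying \eqref{defsym} with singletons in both directions, and the reverse direction correctly extracts the monotonicity $\Vert g\Vert\le\Vert g+\eta\x_b\Vert$ from the sign-flip case $n=j$ of (S) together with convexity, upgrades the one-coordinate identity to equal-cardinality sets by swapping indices one at a time (the bookkeeping on supports and moduli of the intermediate vectors checks out), and then absorbs the surplus set $B_2$ via monotonicity. The one point I would tighten is the subcase $n=j$ of the necessity direction: your auxiliary index $k\notin\supp(f)\cup\{n\}$ need not exist if $\supp(f)$ is infinite and exhausts $\N\setminus\{n\}$, so strictly speaking you should first reduce to $f\in\X_f$ by the density argument the paper itself invokes (via \cite{BDKOW}) and then pass to the limit; alternatively, note that the $n=j$ case is only ever used in the sufficiency direction, where you apply (S) as a hypothesis and no auxiliary index is needed. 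This is a routine technicality, not a flaw in the strategy.
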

	\begin{cor}\label{corsym}
		Let $\mathcal B$ be a $1$-symmetric for largest coefficients basis in a Banach space $\X$. Then,
		$$\Vert f\Vert\leq \Vert f-P_{\lbrace n\rbrace}(f)+t\eta_k\x_k\Vert,$$
		for every $f\in\X$, $t\geq \Vert (\x_n^*(f))_n\Vert_\infty$, $n\in\text{supp}(f)$, $k\not\in\text{supp}(f)$ and $\vert\eta_k\vert=1$.
	\end{cor}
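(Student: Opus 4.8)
The plan is to deduce the inequality from Proposition \ref{aa} after rescaling by $t$, the only subtlety being that Proposition \ref{aa} compares perturbations by a coefficient of modulus \emph{exactly} $t$, whereas on the left-hand side we carry the actual coefficient $\x_n^*(f)$, whose modulus is only known to be at most $t$; this gap I would close with a one-dimensional convexity argument.

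Concretely, I would first set $\alpha:=\x_n^*(f)$ (nonzero, since $n\in\text{supp}(f)$) and $g:=f-P_{\lbrace n\rbrace}(f)=f-\alpha\x_n$, so that $\text{supp}(g)=\text{supp}(f)\setminus\lbrace n\rbrace$; in particular $n\notin\text{supp}(g)$, and since $k\notin\text{supp}(f)$ we get $k\neq n$ and $k\notin\text{supp}(g)$. Also $\Vert(\x_m^*(g))_m\Vert_\infty\leq\Vert(\x_m^*(f))_m\Vert_\infty\leq t$, hence $g/t$ has all biorthogonal coefficients of modulus at most $1$. Writing $\beta:=\alpha/\vert\alpha\vert$, Proposition \ref{aa} applied to $g/t$ (valid because $n,k\notin\text{supp}(g/t)$) gives $\Vert g/t+\beta\x_n\Vert=\Vert g/t-\beta\x_n\Vert=\Vert g/t+\eta_k\x_k\Vert$, and multiplying by $t$,
$$\Vert g+t\beta\x_n\Vert=\Vert g-t\beta\x_n\Vert=\Vert g+t\eta_k\x_k\Vert=:M.$$

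Next, since $0<\vert\alpha\vert\leq t$, the scalar $\alpha=\vert\alpha\vert\beta$ lies on the segment joining $t\beta$ and $-t\beta$: with $\lambda:=\tfrac{t+\vert\alpha\vert}{2t}\in(\tfrac12,1]$ one has $(2\lambda-1)t=\vert\alpha\vert$, whence
$$f=g+\alpha\x_n=\lambda\,(g+t\beta\x_n)+(1-\lambda)\,(g-t\beta\x_n).$$
Convexity of $\Vert\cdot\Vert$ and the identities above then yield
$$\Vert f\Vert\leq\lambda\,\Vert g+t\beta\x_n\Vert+(1-\lambda)\,\Vert g-t\beta\x_n\Vert=M=\Vert f-P_{\lbrace n\rbrace}(f)+t\eta_k\x_k\Vert,$$
which is the claim.

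The step I expect to be the crux is the convexity argument just displayed: it is what lets one pass from a coefficient of modulus exactly $t$ (the situation handled by Proposition \ref{aa} and, underneath it, by the definition of $1$-symmetry for largest coefficients) to the honest coefficient $\x_n^*(f)$, using crucially the sign-independence $\Vert g+t\beta\x_n\Vert=\Vert g-t\beta\x_n\Vert$. Note this works verbatim over both $\R$ and $\C$, since $\alpha$, $t\beta$ and $-t\beta$ are real multiples of the single unit scalar $\beta$. The remaining points—support disjointness and the degenerate case $\text{supp}(f)=\lbrace n\rbrace$, where simply $g=0$—are routine.
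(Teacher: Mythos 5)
Your proof is correct and follows essentially the same route as the paper's: rescale by $t$, apply Proposition \ref{aa} to $g=f-P_{\{n\}}(f)$ (noting $n,k\notin\supp(g)$ and that $g$'s coefficients are still bounded by $t$), and then recover $f$ from $g$ by a convexity argument in the coefficient at $n$. The only cosmetic difference is that you exhibit $\x_n^*(f)$ explicitly as a convex combination of the two antipodal points $\pm t\beta$, whereas the paper bounds $\Vert f\Vert$ by a supremum over all unimodular perturbations $t\varepsilon_n\x_n$; it is the same convexity idea.
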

	\begin{proof}
		Take $f\in\X$ and $j,k\not\in\supp(f)$ and $\vert\varepsilon_j\vert=\vert\eta_k\vert=1$. Now, if we define $f'=\dfrac{f}{t}$ where $t\geq\Vert(\x_n^*(f))\Vert_\infty$, $\Vert(\x_n^*(f'))\Vert_\infty\leq 1$, so if we apply Proposition \ref{aa}, we obtain
		\begin{eqnarray}\label{corr}
			\Vert f'+\varepsilon_j\x_j\Vert=\Vert f'+\eta_k\x_k\Vert.
		\end{eqnarray}
		Multiplying now by $t$ in \eqref{corr}, we have
		\begin{eqnarray}\label{corr1}
			\Vert f+t\varepsilon_j\x_j\Vert=\Vert f+t\eta_k\x_k\Vert.
		\end{eqnarray}
		Consider now $n\in\supp(f)$ and define $g:=f-P_{\lbrace n\rbrace}(f)$. Applying $n$ as $j$ in \eqref{corr1} we obtain
		\begin{eqnarray}\label{last}
			\Vert g+t\varepsilon_n\x_n\Vert=\Vert g+t\eta_k\x_k\Vert,
		\end{eqnarray}
		where applying convexity,
		\begin{eqnarray*}
			\Vert f\Vert=\Vert f-P_{\lbrace n\rbrace}(f)+P_{\lbrace n\rbrace}(f)\Vert\leq\sup_{\vert\varepsilon_n\vert=1}\Vert f-P_{n}(f)+t\varepsilon_n\x_n\Vert\underset{\eqref{last}}{\leq}\Vert f-P_{\lbrace n\rbrace}(f)+t\eta_k\x_k\Vert.
		\end{eqnarray*}
	\end{proof}
	
	\section{Property (Q$^*$)}\label{section3}
	We analyze in this section an equivalence of the Property (Q$^*$) for the constant $1$. Remember that a basis has the Property (Q$^*$) with constant $\mathcal Q$ if
	$$\Vert f+\one_{\varepsilon A}\Vert\leq \mathcal Q\Vert f+y\Vert,$$
	for any $f\in\X$, $y\in\mathbb X_f$, $\Vert (\x_n^*(f))_n\Vert_\infty\leq 1$, $\vert A\vert\leq \vert B\vert<\infty$ with $B=\lbrace n\in\text{supp}(y) : \vert\x_n^*(y)\vert=1\rbrace$, $\supp(f)\cap \supp(y)=\emptyset, \supp(f+y)\cap A=\emptyset$ and $\vert\varepsilon\vert=1$.
	
	We note that thanks to \cite[Lemma 3.2]{BDKOW}, we can replace $f\in\mathbb X$ by $f\in\mathbb X_f$ thanks to a density argument in the last definition. 
	
	The main result here is the following equivalence.
	\begin{theorem}\label{1sym}
		Let $\mathcal B$ a basis in a Banach space $\X$. $\mathcal B$ has the Property (Q$^*$) with $\mathcal Q=1$ if and only if
		\begin{eqnarray}\label{refq}
			\Vert f+\varepsilon_n \x_n\Vert\leq \Vert f+\eta_k\x_k +y\Vert,
		\end{eqnarray}
		for any $f\in\X$, $y\in \X_f$, $\Vert (\x_n^*(f))_n\Vert_\infty\leq 1$, $n,k$ different indices such that $\text{supp}(f)\cap\text{supp}(y)=\emptyset$, $\text{supp}(f+y)\cap\lbrace n,k\rbrace=\emptyset$ and $\vert\varepsilon_n\vert=\vert\eta_k\vert=1$.
	\end{theorem}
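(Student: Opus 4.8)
The plan is to prove the two implications separately, with the forward direction being essentially a specialization and the reverse direction requiring us to recover the full strength of Property (Q$^*$) from its ``singleton'' version \eqref{refq}.

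\emph{From Property (Q$^*$) with $\mathcal Q=1$ to \eqref{refq}.} This direction should be a direct matter of matching the hypotheses of \eqref{defq} to those of \eqref{refq}. Given $f$, $y$ and indices $n,k$ as in \eqref{refq}, set $A=\{n\}$, $\varepsilon=(\varepsilon_n)$, and take as the perturbation vector the element $y'=y+\eta_k\x_k$. Then $\supp(f)\cap\supp(y')=\emptyset$ (since $k\notin\supp(f)$), and the set $B'=\{m\in\supp(y'):|\x_m^*(y')|=1\}$ contains $k$, so $|A|=1\le |B'|$; also $\supp(f+y')\cap A=\emptyset$ because $n\notin\supp(f+y)$ and $n\ne k$. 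Applying \eqref{defq} with $\mathcal Q=1$ to $f+\one_{\varepsilon A}$ and $f+y'$ yields exactly $\Vert f+\varepsilon_n\x_n\Vert\le\Vert f+\eta_k\x_k+y\Vert$, which is \eqref{refq}.

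\emph{From \eqref{refq} to Property (Q$^*$) with $\mathcal Q=1$.} Here I would fix $f$, $y\in\X_f$, a sign vector $\varepsilon=(\varepsilon_j)_{j\in A}$ and the set $B=\{n\in\supp(y):|\x_n^*(y)|=1\}$ with $|A|\le|B|$, all under the stated disjointness and normalization conditions, and prove $\Vert f+\one_{\varepsilon A}\Vert\le\Vert f+y\Vert$ by adding the elements of $A$ one at a time while simultaneously ``consuming'' elements of $B$. Write $A=\{a_1,\dots,a_r\}$ and pick distinct indices $b_1,\dots,b_r\in B$. The idea is to interpolate through the chain of vectors $h_i:=f+\sum_{\ell\le i}\varepsilon_{a_\ell}\x_{a_\ell}+\sum_{\ell>i}\eta_\ell\x_{b_\ell}+y_i$, where $y_i$ is the ``leftover'' part of $y$ obtained by deleting the coordinates $b_{i+1},\dots,b_r$ from $y$ (and $\eta_\ell=\x_{b_\ell}^*(y)$, which has modulus one). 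At each step I would like to pass from $h_{i-1}$ to $h_i$ by a single application of \eqref{refq}: the coordinate $b_i$, which currently appears as $\eta_i\x_{b_i}$ in $h_{i-1}$, gets absorbed into the ``$y$'' part while a new unit vector $\varepsilon_{a_i}\x_{a_i}$ is added — that is, \eqref{refq} with the roles $n\rightsquigarrow a_i$, $k\rightsquigarrow b_i$, and the perturbation vector being everything else. One must check that at each stage the vector playing the role of ``$f$'' (namely $f$ together with the already-placed $\varepsilon$-vectors and the not-yet-processed $\eta$-vectors on indices $b_\ell$, $\ell>i$, but \emph{not} including $b_i$) has $\ell_\infty$-norm of its coefficients at most $1$; this holds because $\Vert(\x_m^*(f))_m\Vert_\infty\le1$, the added coefficients are signs $\varepsilon_{a_\ell}$ and $\eta_{b_\ell}$ of modulus one, and all these indices are disjoint. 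After $r$ steps we arrive at $f+\one_{\varepsilon A}+\sum_{\ell}(\text{residual part of }y\text{ off }B\text{, plus the }b_\ell\text{ coordinates now re-expressed as part of }y)$, and a final bookkeeping check identifies the endpoint with $f+y$. Actually, it is cleaner to run the telescoping in the other order: start from $f+y$, use the $b_i$'s as ``slots'' and \eqref{refq} (reading the inequality with $f+y$ on the larger side) to successively replace each $\eta_{b_i}\x_{b_i}$ by $\varepsilon_{a_i}\x_{a_i}$, arriving at $f+\one_{\varepsilon A}+ (y-\sum_i \eta_{b_i}\x_{b_i})$; since the residual $y-\sum_i\eta_{b_i}\x_{b_i}$ is supported off $A\cup\supp(f)$ we then need to strip it off, which again follows by iterating \eqref{refq} with $A$-side and the other side differing only by moving one coordinate of the residual into the perturbation — or, more simply, by noting that \eqref{refq} with $y$ replaced by $0$ on suitable coordinates already subsumes this.

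\emph{Main obstacle.} The delicate point is not any single inequality but the orchestration: making sure that when \eqref{refq} is invoked at step $i$, \emph{all} three side conditions — the $\ell_\infty$ bound on the ``$f$''-part, the disjointness of $\supp(f)$, $\supp(y)$, and $\{n,k\}$, and $n,k\notin\supp(f+y)$ — hold for the intermediate vectors, and that the residual part of $y$ (the coordinates of $\supp(y)\setminus B$, and the coordinates of $B$ not used as slots) is transported correctly so that the telescoping genuinely closes up at $f+y$ and at $f+\one_{\varepsilon A}$. I would organize this with a clean inductive statement of the form ``for every $0\le i\le r$, $\Vert f+\sum_{\ell\le i}\varepsilon_{a_\ell}\x_{a_\ell}+z_i\Vert\le\Vert f+y\Vert$'' for an appropriately defined $z_i\in\X_f$ with $\supp(z_i)\supseteq\{b_{i+1},\dots,b_r\}$, each $b_\ell$-coordinate of modulus one, and $i=r$ giving the claim once $z_r$ is seen to be removable. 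The density reduction to $f,y\in\X_f$ is already granted in the text (via \cite[Lemma 3.2]{BDKOW} and Proposition \ref{1un}-style arguments), so no separate limiting argument is needed.
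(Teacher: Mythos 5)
Your forward implication is correct and coincides with the paper's argument: take $A=\{n\}$ and $y'=y+\eta_k\x_k$, note that $k$ belongs to the associated set $B'$ so $|A|=1\le|B'|$, and apply \eqref{defq}. Your reverse implication also follows the same overall strategy as the paper (swap the elements of $A$ for slots $b_1,\dots,b_r\in B$ one at a time by iterating \eqref{refq}), but it contains a genuine gap, located exactly at the point you yourself flag as delicate: the transport of the part of $y$ not carried by the chosen slots, namely $y_{\mathrm{res}}:=y-\sum_{i}\x_{b_i}^*(y)\x_{b_i}$, which comprises all of $\supp(y)\setminus B$ together with the unused elements of $B$ when $|A|<|B|$. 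The coefficients of $y$ on $\supp(y)\setminus B$ are only required to have modulus $\ne 1$; they may exceed $1$. Hence $y_{\mathrm{res}}$ can never sit in the ``$f$''-slot of \eqref{refq}, since that slot carries the hypothesis $\Vert(\x_m^*(\cdot))_m\Vert_\infty\le1$, and your verification of the $\ell_\infty$ bound accounts only for $f$, the $\varepsilon$'s and the $\eta$'s, not for $y_{\mathrm{res}}$. This sinks your ``cleaner'' second variant: during the swaps $y_{\mathrm{res}}$ is illegally parked in the $f$-slot, and the terminal ``stripping'' step $\Vert f+\one_{\varepsilon A}\Vert\le\Vert f+\one_{\varepsilon A}+y_{\mathrm{res}}\Vert$ is a suppression-unconditionality statement that \eqref{refq} cannot deliver, because in \eqref{refq} the two sides always differ by distinct unit vectors $\varepsilon_n\x_n$ and $\eta_k\x_k$ with $n\ne k$, so the two sides can never be made to differ by $y_{\mathrm{res}}$ alone. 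Your first variant does not close up either: with $y_i=y-\sum_{\ell>i}\eta_\ell\x_{b_\ell}$ one computes $h_i=f+\sum_{\ell\le i}\varepsilon_{a_\ell}\x_{a_\ell}+y$, so the chain runs from $f+y$ to $f+\one_{\varepsilon A}+y$ rather than to $f+\one_{\varepsilon A}$.

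The repair, which is what the paper's proof does, is forced by the structure of \eqref{refq}: the ``$y$''-slot appears only on the large side, so whatever it absorbs disappears from the chain at that single step; therefore $y_{\mathrm{res}}$ must be swallowed in exactly one application of \eqref{refq}, namely the step of the chain adjacent to $\Vert f+y\Vert$. Concretely, perform all swaps $\varepsilon_{a_i}\x_{a_i}\leftrightarrow\eta_{b_i}\x_{b_i}$ but one with the $y$ of \eqref{refq} equal to $0$ (legitimate, since modulus-one coefficients may sit in the $f$-slot), and perform the remaining swap with $y=y_{\mathrm{res}}$ plus the unused slots, which lands exactly on $f+y$. With that single reorganization your induction goes through and becomes the paper's proof.
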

	\begin{proof}
		Assume that we have the Property (Q$^*$) with $\mathcal Q=1$ and take now $f,\varepsilon_n, \eta_k$ and $y$ as in \eqref{refq}. Taking in \eqref{defq} $A=\lbrace n\rbrace$ and $y'=y+\eta_k\x_k$ with $B=\lbrace k\rbrace$,
		$$\Vert f+\varepsilon_n\x_n\Vert \underset{\eqref{defq}}{\leq} \Vert f+y'\Vert=\Vert f+\eta_k\x_k+y\Vert,$$
		so we obtain \eqref{refq}. 
		
		Assume now that we have \eqref{refq} and take $f,y,A,B,\varepsilon$ and $\eta$ as in the definition of the Property (Q$^*$), that is, as in \eqref{defq}. First of all, we do the following decomposition of $A$ and $B$: $A=\lbrace n_1,\dots,n_k\rbrace$ and $B=\lbrace m_1,\dots,m_p\rbrace$ with $p\geq k$ and $y=y_r+\sum_{j\in B}\eta_j\x_j$. Hence,
		\begin{eqnarray*}
			\Vert f+\varepsilon_{n_1}\x_{n_1}+\varepsilon_{n_2}\x_{n_2}\Vert &=&	\Vert (f+\varepsilon_{n_1}\x_{n_1})+\varepsilon_{n_2}\x_{n_2}\Vert\\
			&\underset{\eqref{refq}, y=0}{\leq}& \Vert (f+\varepsilon_{n_1}\x_{n_1})+\eta_{m_1}\x_{m_1}\Vert\\
			&=& \Vert (f+\eta_{m_1}\x_{m_1})+\varepsilon_{n_1}\x_{n_1}\Vert\\
			&\underset{\eqref{refq},y=0}{\leq}& \Vert f+\eta_{m_1}\x_{m_1}+\eta_{m_2}\x_{m_2}\Vert.
		\end{eqnarray*}
		Assume now that
		\begin{eqnarray}\label{refq2}
			\left\Vert f+\sum_{j=1}^{k-1}\varepsilon_{n_j}\x_{n_j}\right\Vert\leq \left\Vert f+\sum_{j=1}^{k-1}\eta_{m_j}\x_{m_j}\right\Vert.
		\end{eqnarray}
		Then,
		\begin{eqnarray*}
			\Vert f+\mathbf 1_{\varepsilon A}\Vert&=&\left\Vert (f+\varepsilon_{n_k}\x_{n_k})+\sum_{j=1}^{k-1}\varepsilon_{n_j}\x_{n_j}\right\Vert\\
			&\underset{\eqref{refq2}}{\leq}& \left\Vert (f+\varepsilon_{n_k}\x_{n_k})+\sum_{j=1}^{k-1}\eta_{m_j}\x_{m_j}\right\Vert\\
			&=& \left\Vert \left(f+\sum_{j=1}^{k-1}\eta_{m_j}\x_{m_j}\right)+\varepsilon_{n_k}\x_{n_k}\right\Vert\\
			&\underset{\eqref{refq},y_1=y_r+\sum_{j=k+1}^p\eta_{m_j}\x_{m_j}}{\leq}&\left\Vert\left(f+\sum_{j=1}^{k-1}\eta_{m_j}\x_{m_j}\right)+\eta_{m_k}\x_{n_k}+y_1\right\Vert\\
			&=&\left\Vert\left(f+\sum_{j=1}^{k-1}\eta_{m_j}\x_{m_j}\right)+\eta_k\x_k + y_r+\sum_{j=k+1}^p\eta_{m_j}\x_{m_j}\right\Vert\\
			&=&\Vert f+y\Vert.
		\end{eqnarray*}
	\end{proof}
	
	\section{Proof of Theorem \ref{main}}\label{section4}
	To study the proof of Theorem \ref{main}, we will use the following results proving the equivalence between the Property (Q$^*$) with $\mathcal Q=1$ with the $1$-symmetry of largest coefficients and $1$-suppression unconditionality.
	\begin{prop}\label{tech}
		Let $\mathcal B$ be a basis of a Banach space $\X$. The following are equivalent:
		\begin{itemize}
			\item[a)] $\mathcal B$ has the Property (Q$^*$) with $\mathcal Q=1$.
			\item[b)] $\mathcal B$ is $1$-suppression unconditional and $1$-symmetric for largest coefficients.
		\end{itemize}
	\end{prop}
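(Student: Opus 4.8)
The plan is to prove the two implications separately, in each case feeding suitable special instances into the inequalities already available and invoking the single-coordinate characterizations: Proposition~\ref{1un} for $1$-suppression unconditionality, Proposition~\ref{aa} for $1$-symmetry for largest coefficients, and Theorem~\ref{1sym} for the Property (Q$^*$) with $\mathcal Q=1$. No step is deep; the real work is the bookkeeping of the coefficient-size and disjoint-support hypotheses.

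For the implication $(a)\Rightarrow(b)$ I would first read off $1$-suppression unconditionality. Choosing $A=\emptyset$ in \eqref{defq} is admissible (then $|A|=0\leq|B|<\infty$ automatically and $\one_{\varepsilon\emptyset}=0$), so the Property (Q$^*$) with $\mathcal Q=1$ yields
$$\Vert f\Vert\leq\Vert f+y\Vert$$
whenever $\Vert(\x_m^*(f))_m\Vert_\infty\leq1$, $y\in\X_f$ and $\supp(f)\cap\supp(y)=\emptyset$. Given $h\in\X$ and $n\in\supp(h)$, put $t=\Vert(\x_m^*(h))_m\Vert_\infty$ (the case $t=0$ being trivial) and apply this with $f=(h-\x_n^*(h)\x_n)/t$ and $y=(\x_n^*(h)/t)\x_n$; multiplying by $t$ gives $\Vert h-\x_n^*(h)\x_n\Vert\leq\Vert h\Vert$, hence $1$-suppression unconditionality by Proposition~\ref{1un}. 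For $1$-symmetry for largest coefficients I would apply \eqref{defq} with $A=\{n\}$ and $y=\eta_k\x_k$, so that $B=\{k\}$ and $|A|=|B|=1$: for $f$ with $\Vert(\x_m^*(f))_m\Vert_\infty\leq1$ and $n,k\notin\supp(f)$, $n\neq k$, this gives $\Vert f+\varepsilon_n\x_n\Vert\leq\Vert f+\eta_k\x_k\Vert$, and swapping the roles of $(n,\varepsilon)$ and $(k,\eta)$ gives the reverse inequality; Proposition~\ref{aa} then delivers $1$-symmetry for largest coefficients.

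For $(b)\Rightarrow(a)$ I would use Theorem~\ref{1sym}, which reduces the Property (Q$^*$) with $\mathcal Q=1$ to the inequality \eqref{refq}. Fix $f\in\X$, $y\in\X_f$, distinct indices $n,k$ and signs $\varepsilon_n,\eta_k$ subject to the hypotheses of \eqref{refq}; since $\supp(f)\cap\supp(y)=\emptyset$ and $\supp(f+y)\cap\{n,k\}=\emptyset$, we have $n,k\notin\supp(f)\cup\supp(y)$. Then I would argue in two steps. First, with $g:=f+\eta_k\x_k+y$ one has $\x_k^*(g)=\eta_k$, so Corollary~\ref{cor1}, applied with $m=k$, the subset $\supp(f)\subset\supp(g)\setminus\{k\}$ and $t=1=|\x_k^*(g)|$, gives $\Vert f+\eta_k\x_k\Vert\leq\Vert g\Vert=\Vert f+\eta_k\x_k+y\Vert$ (using $\x_j^*(g)=\x_j^*(f)$ for $j\in\supp(f)$, together with the fact that $1$-suppression unconditionality makes $\sum_{j\in\supp(f)}\x_j^*(f)\x_j=f$). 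Second, since $n,k\notin\supp(f)$ and $\Vert(\x_m^*(f))_m\Vert_\infty\leq1$, Proposition~\ref{aa} gives $\Vert f+\varepsilon_n\x_n\Vert=\Vert f+\eta_k\x_k\Vert$. Chaining these two estimates yields \eqref{refq}, and Theorem~\ref{1sym} concludes.

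The argument is essentially bookkeeping, so there is no single serious obstacle. The point that requires the most care is conceptual rather than technical: all the auxiliary results (Propositions~\ref{1un}, \ref{aa}, \ref{bb}, Corollary~\ref{cor1} and Theorem~\ref{1sym}) are single-coordinate statements, so the whole burden of handling sets $A,B$ with $|A|\leq|B|$ and arbitrary unimodular signs inside the Property (Q$^*$) has already been discharged by Theorem~\ref{1sym}; what is left is only to verify, in each invoked instance, that the normalization $\Vert(\x_m^*(\cdot))_m\Vert_\infty\leq1$ and the relevant support-disjointness relations hold, and to observe that the degenerate choice $A=\emptyset$ is permitted in \eqref{defq}. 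One could alternatively deduce Proposition~\ref{tech} by combining Theorems~\ref{th2} and~\ref{bb2} with the elementary normalizations $C_g\geq1$, $K_s\geq1$, $\Delta\geq1$, but the direct route above stays within the machinery of Sections~\ref{section2} and~\ref{section3}.
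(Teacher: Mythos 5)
Your proof is correct and follows essentially the same route as the paper: the same special instances of \eqref{defq} (with $A=\emptyset$ and with $A=\{n\}$, $y=\eta_k\x_k$) for a)$\Rightarrow$b), and the same two-step chain (Proposition \ref{aa} followed by deleting $\supp(y)$ by unconditionality) through Theorem \ref{1sym} for b)$\Rightarrow$a). The only real difference is that where you invoke Corollary \ref{cor1} with $B=\supp(f)$ --- which is awkward when $\supp(f)$ is infinite, since the sum $\sum_{j\in B}\x_j^*(g)\x_j$ need not be defined --- the paper simply applies $1$-suppression unconditionality to $f'=f+\eta_k\x_k+y$ with the finite set $A=\supp(y)$, i.e.\ $\Vert f'-P_{\supp(y)}(f')\Vert\le\Vert f'\Vert$, which is the cleaner way to get the same inequality.
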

	\begin{proof}
		Assume a). By Proposition \ref{1un}, we only have to show that
		$$\Vert f-\x_j^*(f)\x_j\Vert\leq\Vert f\Vert,\; \forall f\in\mathbb X,$$
		with $\lbrace j\rbrace\subset\text{supp}(f)$. Take then $f\in\X$ and $j\in\text{supp}(f)$ and define $f_1=\dfrac{f}{\Vert (\x_n^*(f))_n\Vert_\infty}$ and $g=f_1-\x_j^*(f_1)$. Hence, taking in \eqref{defq} $A=\emptyset$ and $y=\x_j^*\left(f_1\right)$,
		$$\Vert g\Vert=\Vert f_1-\x_j^*\left(f_1\right)\Vert\leq \Vert g+\x_j^*\left(f_1\right)\Vert=\Vert f_1\Vert.$$
		Then, we have that
		$$\left\Vert \dfrac{f}{\Vert (\x_n^*(f)_n\Vert_\infty}-\x_j^*\left(\dfrac{f}{\Vert (\x_n^*(f))_n\Vert_\infty}\right)\right\Vert\leq \left\Vert \dfrac{f}{\Vert (\x_n^*(f))_n\Vert_\infty}\right\Vert.$$
		Now, since $\x_j^*$ are linear for any $j$,
		$$\left\Vert \dfrac{f}{\Vert (\x_n^*(f))_n\Vert_\infty}-\dfrac{1}{\Vert (\x_n^*(f))_n\Vert_\infty}\x_j^*\left(f\right)\right\Vert\leq \left\Vert \dfrac{f}{\Vert (\x_n^*(f))_n\Vert_\infty}\right\Vert,$$
		and this implies that
		$$\Vert f-\x_j^*(f)\x_j\Vert\leq\Vert f\Vert,\; \forall f\in\mathbb X,$$
		so the basis is $1$-suppression unconditional.
		
		%	By Theorem \ref{1sym}, we know that
		%\begin{eqnarray*}
		%	\Vert f+\varepsilon_n \x_n\Vert\leq \Vert f+\eta_k\x_k +y\Vert,
		%\end{eqnarray*}
		%for any $f\in\X$, $y\in \X_f$, $\Vert (\x_n^*(f))_n\Vert_\infty\leq 1$, $n,k$ different indices such that $\text{supp}(f)\cap\text{supp}(y)=\emptyset$, $\text{supp}(f+y)\cap\lbrace n,k\rbrace=\emptyset$ and $\vert\varepsilon_n\vert=\vert\eta_k\vert=1$.

		Now, we need to show that the basis is $1$-symmetric for largest coefficients. For that, invoking Proposition \ref{aa}, so we only have to show that
		$$\Vert f+\varepsilon_n \x_n\Vert=\Vert f+\eta_j\x_j\Vert,$$
		for every $f\in\mathbb X$, $\vert\varepsilon_n\vert=\vert\eta_j\vert=1$ with $n,j\not\in\text{supp}(f)$, but this equality is trivial taking in \eqref{refq} $y=0$. Thus, a) is done.
		
		Prove now b). Assume that $\mathcal B$ is $1$-suppression unconditional and $1$-symmetric for largest coefficients. We have to show that
		\begin{eqnarray*}
			\Vert f+\varepsilon_n \x_n\Vert\leq \Vert f+\eta_k\x_k +y\Vert,
		\end{eqnarray*}
		for any $f\in\X,y\in \X_f$, $\Vert (\x_n^*(f))_n\Vert_\infty\leq 1$, $n,k$ different indices such that $\text{supp}(f)\cap\text{supp}(y)=\emptyset$, $\text{supp}(f+y)\cap\lbrace n,k\rbrace=\emptyset$ and $\vert\varepsilon_n\vert=\vert\eta_k\vert=1$.
		
		First of all, using the $1$-symmetry for largest coefficients (Proposition \ref{aa}), we have
		\begin{eqnarray*}
			\Vert f+\varepsilon_n \x_n\Vert\leq \Vert f+\eta_k\x_k\Vert.
		\end{eqnarray*}
		Now, if we define $f':= f+\eta_k\x_k+y$, if $A=\text{supp}(y)$, applying $1$-suppression unconditionality, we obtain
		$$\Vert f+\eta_k\x_k\Vert=\Vert f'-P_A(f')\Vert\leq \Vert f'\Vert=\Vert f+\eta_k\x_k+y\Vert,$$
		so the basis has the Property (Q$^*$) with $\mathcal Q=1$.
	\end{proof}
	\begin{prop}\label{tech2}
		Let $\mathcal B$ be a basis in a Banach space $\X$. If $\mathcal B$ is $1$-suppression unconditional and $1$-symmetric for largest coefficients, then
		$$\Vert f-\mathcal G_1(f)\Vert=\sigma_1(f),\; \forall f\in\X.$$
	\end{prop}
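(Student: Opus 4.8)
The inequality $\sigma_1(f)\leq\Vert f-\mathcal G_1(f)\Vert$ is immediate, since $\mathcal G_1(f)=\x_{\rho(1)}^*(f)\x_{\rho(1)}$ is supported on a single index and is therefore one of the competitors in the infimum defining $\sigma_1(f)$. So the whole content of the proposition is the reverse inequality: I must show $\Vert f-\mathcal G_1(f)\Vert\leq\Vert f-y\Vert$ for every $y$ with $\vert\supp(y)\vert\leq 1$, and then take the infimum over such $y$. One may assume $f\neq 0$; write $j=\rho(1)$ for the leading greedy index, so that $\mathcal G_1(f)=\x_j^*(f)\x_j$ and $t:=\vert\x_j^*(f)\vert=\Vert(\x_n^*(f))_n\Vert_\infty$. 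Every admissible $y$ is either $0$ or of the form $a\x_k$ for some index $k$ and scalar $a$.

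The first move is to dispose of all configurations except one by $1$-suppression unconditionality alone. If $\supp(y)\cap(\supp(f)\setminus\{j\})=\emptyset$ — this covers $y=0$, $y=a\x_j$, and $y=a\x_k$ with $k\notin\supp(f)$ — then $f-\x_j^*(f)\x_j$ is precisely the coordinate projection of $f-y$ onto the complement of $\supp(y)\cup\{j\}$, so Proposition \ref{1un} gives $\Vert f-\mathcal G_1(f)\Vert=\Vert f-\x_j^*(f)\x_j\Vert\leq\Vert f-y\Vert$. The only remaining case is $y=a\x_k$ with $k\in\supp(f)$ and $k\neq j$.

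For that case I would set $g:=f-\x_j^*(f)\x_j-\x_k^*(f)\x_k$, so that $\supp(g)$ avoids $\{j,k\}$ and $\Vert(\x_n^*(g))_n\Vert_\infty\leq t$, and then argue in two steps. First, apply Corollary \ref{corsym} to $f':=g+\x_k^*(f)\x_k$ with distinguished index $k\in\supp(f')$, threshold $t\geq\Vert(\x_n^*(f'))_n\Vert_\infty$, new index $j\notin\supp(f')$, and unimodular sign $\eta_j=\x_j^*(f)/\vert\x_j^*(f)\vert$; since $f'-P_{\{k\}}(f')=g$ and $t\eta_j=\x_j^*(f)$, this yields $\Vert f-\x_j^*(f)\x_j\Vert=\Vert g+\x_k^*(f)\x_k\Vert\leq\Vert g+\x_j^*(f)\x_j\Vert$. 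Second, writing $f-a\x_k=g+\x_j^*(f)\x_j+(\x_k^*(f)-a)\x_k$ and deleting the coordinate $k$ via $1$-suppression unconditionality, $\Vert g+\x_j^*(f)\x_j\Vert\leq\Vert f-a\x_k\Vert$. Chaining the two inequalities gives $\Vert f-\mathcal G_1(f)\Vert\leq\Vert f-a\x_k\Vert$, which completes the argument.

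I expect the first of these two steps to be the main obstacle: the point is to recognize that Corollary \ref{corsym} is exactly the device that transfers the coefficient living at $k$ over to the leading index $j$ while harmlessly inflating its modulus up to $t$, and to verify that its hypotheses (the distinguished index inside the support, the new index outside it, and the $\ell_\infty$-bound on the remaining coefficients) are precisely what survives after peeling off the two coordinates $j$ and $k$ — which is also why the configurations with $k\notin\supp(f)$, where $k\notin\supp(f')$ and the corollary does not apply, must be separated out beforehand. A more economical but less transparent route would be to skip the case analysis altogether: Proposition \ref{tech} upgrades the hypotheses to Property (Q$^*$) with $\mathcal Q=1$, and then the quantitative bound $C_g\leq\mathcal Q^2$ from Theorem \ref{bb2} forces $\Vert f-\mathcal G_1(f)\Vert\leq\sigma_1(f)$ for every $f$, which together with the trivial reverse inequality gives the claim.
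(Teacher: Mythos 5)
Your proof is correct and follows essentially the same route as the paper: Corollary \ref{corsym} transfers the coefficient sitting at the approximant's index over to the leading greedy index (inflated to modulus $t=\vert\x_j^*(f)\vert$), and suppression unconditionality then deletes that index from $f-y$ to close the chain. The only differences are cosmetic — you separate out the degenerate positions of $y$ (the cases $y=0$, $\supp(y)=\{j\}$, or $\supp(y)\not\subset\supp(f)$, which the paper's proof glosses over) and use plain $1$-suppression unconditionality where the paper invokes Corollary \ref{cor1}.
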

	\begin{proof}
		Take $f\in\X$, $n_g$ the index such that $P_{n_g}(f)=\mathcal G_1(f)$ and $y\in\X_f$ such that $\sigma_1(f)=\Vert f-y\Vert$ with $n_a=\supp(y)$. Taking now $\varepsilon\equiv\lbrace \sgn(\x_n^*(f))\rbrace$,
		\begin{eqnarray}\label{us1}
			\Vert f-\mathcal G_1(f)\Vert&\underset{\text{Corollary}\,  \ref{corsym}}{\leq}&\Vert f-P_{n_g}(f)-P_{n_a}(f)+t\varepsilon_{n_g}\x_{n_g}\Vert\\
			&=&\Vert P_{(n_g\cup n_a)^c}(f-y)+t\varepsilon_{n_g}\x_{n_g}\Vert,
		\end{eqnarray}
		where $t=\vert \x_{n_g}^*(f)\vert$. Now, since $(n_g\cup n_a)^c\subset \text{supp}(f-y)\setminus n_g$,
		\begin{eqnarray}\label{us2}
			\Vert P_{(n_g\cup n_a)^c}(f-y)+t\varepsilon_{n_g}\x_{n_g}\Vert\underset{\text{Corrolary}\, \ref{cor1}}{\leq}\Vert f-y\Vert.
		\end{eqnarray}
		Hence, by \eqref{us1} and \eqref{us2}, we obtain
		$$\Vert f-\mathcal G_1(f)\Vert\leq\sigma_1(f),$$
		and then, since $\sigma_1(f)\leq \Vert f-\mathcal G_1(f)\Vert,$
		we obtain that
		$$\Vert f-\mathcal G_1(f)\Vert=\sigma_1(f),\; \forall f\in\X.$$
		
		Now, assume that 
		$$\Vert f-\mathcal G_1(f)\Vert=\sigma_1(f),\; \forall f\in\X.$$
		We need to show that $\mathcal B$ is $1$-suppression unconditional and $1$-symmetric for largest coefficients. We start with the symmetry. Take $f,\varepsilon$ and $\eta$ as in the Proposition \ref{aa} and define the element $h:=f+\varepsilon_n\x_n+(1+\gamma)\eta_k\x_k$ with $\gamma>0$. Hence, the set $\lbrace k\rbrace $ is a greedy set with cardinality $1$. Then
		\begin{eqnarray*}
			\Vert f+\varepsilon_n\x_n\Vert=\Vert h-\mathcal G_1(h)\Vert\underset{\eqref{proof1}}{\leq}\sigma_1(h)\leq\Vert h-\varepsilon_n\x_n\Vert=\Vert f+(1+\gamma)\eta_k\x_k\Vert.
		\end{eqnarray*}
		Taking the limit when $\gamma\rightarrow 0$, the basis is $1$-symmetric for largest coefficients. Now, is the turn of $1$-suppression unconditionality. For that, take any $\lbrace j\rbrace\in\supp(f)$ with $f\in\mathbb X$ and $\alpha$ big enough such that $(\alpha+\x_j^*(f))$ is bigger than $\Vert(\x_n^*(f))_n\Vert_\infty$. Then, define now
		$$g:=(f-\x_j^*(f))+(\alpha+\x_j^*(f))\x_j.$$
		It is clearthat $\lbrace j\rbrace$ is now the $1$-greedy set of $g$. Hence,
		\begin{eqnarray*}
			\Vert f-\x_j^*(f)\Vert = \Vert g-\mathcal G_1(g)\Vert\underset{\eqref{proof1}}{\leq}\sigma_1(g)\leq\Vert g-\alpha\x_j\Vert=\Vert f\Vert,
		\end{eqnarray*}
		so the basis is $1$-suppression unconditional.
	\end{proof}
	
	\begin{proof}[Proof of Theorem \ref{main}]  Of course, a) implies b) since if $\mathcal B$ is $1$-greedy, then we have \eqref{proof1}, that is,
		\begin{eqnarray*}
			\Vert f-\mathcal G_1(f)\Vert=\sigma_1(f),\; \forall f\in\mathbb X,
		\end{eqnarray*}
		Now b) is equivalent to c) using Proposition \ref{tech2}. Also, using now Proposition \ref{tech}, c) is equivalent to d). Finally, we have to show that d) implies a), but this is trivial invoking Theorem \ref{bb2}.
	\end{proof}
	
	\section{Another consequences of Theorem \ref{main}}\label{section5}
	Some years ago, T. Oikhberg introduced in \cite{O} a variant of the Thresholding Greedy Algorithm for gaps, that is, selecting a sequence of positive integers $\mathbf n=(n_i)_{i\in\N}$ with $n_1<n_2<\dots$, the \textbf{$\mathbf n$-greedy bases} are as those bases where there exists a positive constant $C$ such that
	\begin{eqnarray}\label{ng}
		\Vert f-\mathcal G_n(f)\Vert\leq C\sigma_n(f),\; \forall n\in\mathbf n, \forall f\in\X.
	\end{eqnarray}
	Of course, if $\mathcal B$ is a greedy basis with constant $C_g$, then \eqref{ng} is satisfied with constant $C\leq C_g$. The most surprise result proved in \cite[Section 1]{O} is that if \eqref{ng} is satisfied with constant $C$, then $\mathcal B$ is greedy with constant $C_g\leq C$, that is, both notions are equivalent using the same constant!
	
	Hence, applying our Theorem \ref{main}, we have the following corollary.
	\begin{cor}
		Let $\mathcal B$ a basis in a Banach space $\X$. The following are equivalent:
		\begin{itemize}
			\item $\mathcal B$ is $1$-greedy.
			\item For every $f\in\X$,
			$$\Vert f-\mathcal G_1(f)\Vert=\sigma_1(f).$$
			\item The basis satisfied the condition \eqref{ng} with constant $C=1$, that is, the basis is $\mathbf n$-greedy with constant 1.
		\end{itemize}
	\end{cor}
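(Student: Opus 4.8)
The plan is to derive the corollary essentially as a formal consequence of Theorem \ref{main} combined with Oikhberg's equivalence from \cite{O}, so the proof will be short and will mostly consist of tracking the relevant constants. The three bullet points are: (i) $\mathcal B$ is $1$-greedy; (ii) $\Vert f-\mathcal G_1(f)\Vert=\sigma_1(f)$ for every $f\in\X$; and (iii) $\mathcal B$ is $\mathbf n$-greedy with constant $C=1$, i.e. \eqref{ng} holds with $C=1$ for some prescribed gap sequence $\mathbf n=(n_i)_{i}$.

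First I would observe that (i) $\Leftrightarrow$ (ii) is nothing but the equivalence of a) and b) in Theorem \ref{main}, so this link is already available and requires no further work. Next I would close the loop through (iii). The implication (i) $\Rightarrow$ (iii) is immediate: if $\mathcal B$ is $1$-greedy then \eqref{defgreedy} holds with $C_g=1$ for \emph{all} $m\in\N$, and in particular for all $n\in\mathbf n$, which is exactly \eqref{ng} with $C=1$. For the converse (iii) $\Rightarrow$ (i), I would invoke the result of Oikhberg quoted in the paragraph preceding the corollary: if \eqref{ng} holds with constant $C$ for the gap sequence $\mathbf n$, then $\mathcal B$ is greedy with $C_g\le C$. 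Applying this with $C=1$ gives $C_g\le 1$; since trivially $C_g\ge 1$ (because $\sigma_m(f)\le\Vert f-\mathcal G_m(f)\Vert$ always, taking $f$ with $\Vert f-\mathcal G_m(f)\Vert>0$ forces the greedy constant to be at least $1$, or more simply one notes the best constant is never smaller than $1$), we conclude $C_g=1$, i.e. $\mathcal B$ is $1$-greedy. Chaining these implications — (i) $\Leftrightarrow$ (ii) via Theorem \ref{main} and (i) $\Leftrightarrow$ (iii) via Oikhberg's theorem — establishes the three-way equivalence.

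I do not anticipate a genuine obstacle here, since everything reduces to citing two already-established results and a one-line constant bookkeeping. The only point requiring a modicum of care is making sure the direction of Oikhberg's inequality is the one that preserves the value $1$: his theorem gives $C_g\le C$, so an $\mathbf n$-greedy basis with constant $1$ is automatically $1$-greedy in the full (all-$m$) sense, and conversely restricting to $n\in\mathbf n$ cannot increase the constant. I would write the proof as a brief cycle of implications, explicitly flagging the use of Theorem \ref{main} for the passage between the first two items and the use of \cite[Section 1]{O} for the passage to and from the $\mathbf n$-greedy formulation, and leave it at that.
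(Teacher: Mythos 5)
Your proposal is correct and follows essentially the same route as the paper: the corollary is obtained by combining Theorem \ref{main} (for the equivalence of the first two items) with Oikhberg's result from \cite{O} that condition \eqref{ng} with constant $C$ forces $C_g\leq C$, together with the trivial observation that a $1$-greedy basis satisfies \eqref{ng} with $C=1$ by restriction to $n\in\mathbf n$. The only cosmetic remark is that your aside justifying $C_g\geq 1$ is unnecessary, since $C_g\leq 1$ already means the basis is $1$-greedy by definition.
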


	\bigskip
	\noindent \textbf{Funding :} P. M. Bern\'a was partially supported by the Grant PID2019-105599GB-I00 (Agencia Estatal de Investigación, Spain). D. González was partially supported by  ESI International Chair@ CEU-UCH.

	%\section{Proof of Lemma~\ref{rate3}}\label{appendix:a2}
	%
	%	\begin{thebibliography}{99}
		%	\bibitem{FalcoHackbusch}\textsc{A.~Falc\'o and W.~Hackbusch}, \textit{On minimal subspaces in tensor representations}, {\em Preprint 70/2010} Max Planck Institute for Mathematics in the Sciences, 2010.
		%	
		%	\bibitem{FN}\textsc{A. Falc\'o, A. Nouy},\textit{Proper generalized decomposition for nonlinear convex problems in tensor Banach spaces}, Number. Math. (2012) 121:503-530.
		%	\bibitem{T1}\textsc{V. N. Temlyakov},\textit{Greedy approximations with regard to bases}, Proceedings of the International Congress of Mathematicians. European Mathematical Society, 2 edition, 2006.
		%\end{thebibliography}
		%
		\begin{bibsection}
			\begin{biblist}
				% A
				%\bib{Kantorovich}{book}{
					%title =     {Functional analysis},
					%   author =    {Akilov, G. P.},
					%   author = {Kantorovich, L. V.},
					%   publisher = {Pergamon Press},
					%    year =      {1982},
					%   edition =   {2d ed},
					%}

				\bib{AA}{article}{
					author = {Albiac F.}, 
					author = {Ansorena J.L.},  
					year = {2017}, 
					title = {Characterization of 1-almost greedy bases}, 
					journal = {Rev. Mat. Complut.}, 
					volume = {30},
					pages = {13-24}, 
				}
				
				\bib{AAB}{article}{
					author = {Albiac F.}, 
					author = {Ansorena J.L.},  
					author = {Bern\'a P. M.},  
					year = {2020}, 
					title = {Asymptotic Greediness of the Haar System in the Spaces $L_p[0,1]$, $1<p<\infty$}, 
					journal = {Constr. Approx.}, 
					volume = {51},
					pages = {427-440}, 
				}
				
				\bib{AABW}{article}{
					author = {Albiac F.}, 
					author = {Ansorena J.L.},  
					author = {Bern\'a P. M.},  
					author = {Wojtaszczyk P.}, 
					year = {2021}, 
					title = {Greedy approximation for biorthogonal systems in quasi-Banach spaces}, 
					journal = {Dissertationes Math.}, 
					volume = {560},
					pages = {1-88}, 
				}
				
				\bib{AW}{article}{
					author = {Albiac F.}, 
					author = {Wojtaszczyk P.}, 
					year = {2006}, 
					title = {Characterization of 1-greedy bases}, 
					journal = {J. Approx. Theory}, 
					volume = {138},
					pages = {65-86}, 
				}
				
				\bib{BB}{article}{
					author = {Bern\'a P.M.}, 
					author = {Blasco \'O.},  
					year = {2017}, 
					title = {Characterization of greedy bases in Banach spaces}, 
					journal = {J. Approx. Theory}, 
					volume = {215},
					pages = {28-39}, 
				}
				
				\bib{BBG}{article}{
					author = {Bern\'a P.M.}, 
					author = {Blasco \'O.},  
					author = {Garrig\'os G.}, 
					year = {2017}, 
					title = {Lebesgue inequalities for the greedy algorithm in general bases}, 
					journal = {Rev. Mat. Complut.}, 
					volume = {30},
					pages = {369-392}, 
				}
		
				\bib{BDKOW}{article}{
				author = {Bern\'a P.M.},
				author = {Dilworth S. J.}, 
				author = {Kutzarova D.},  
				author = {Oikhberg T.},
				author = {Wallis B.}
				year = {2019}, 
				title = {The weighted property (A) and the greedy algorithm}, 
				journal = {J. Approx. Theory}, 
				volume = {248},
				pages = {105300}, 
			}
				
				\bib{DKOSZ}{article}{
					author = {Dilworth S. J.}, 
					author = {Kutzarova D.},  
					author = {Odell E.},
					author = {Schlumprecht Th.},
					author = {Zs\'ak A.},
					year = {2014}, 
					title = {Renorming spaces with greedy bases}, 
					journal = {J. Approx. Theory}, 
					volume = {188},
					pages = {39-56}, 
				}

				\bib{KT}{article}{
					author = {Konyagin S.V.}, 
					author = {Temlyakov V. N.},  
					year = {1999}, 
					title = {A remark on greedy
						approximation in Banach spaces}, 
					journal = {East. J. Approx.}, 
					volume = {5},
					pages = {365-379}, 
				}
			
			\bib{O}{article}{
				author = {Oikhberg T.},  
				year = {2018}, 
				title = {	Greedy algorithm with gaps}, 
				journal = {J. Approx. Theory}, 
				volume = {225},
				pages = {176-190}, 
			}

				\bib{W}{article}{
					author = { Wojtaszczyk P.},   
					year = {2000}, 
					title = {Greedy Algorithm for General Biorthogonal Systems}, 
					journal = {J. Approx. Theory}, 
					volume = {107},
					pages = {293-314}, 
				}

				%B
				
			\end{biblist}
		\end{bibsection}
		
		%% ------------------------------------------------------------------------
	\end{document}